

\newcommand\bbR{\mathbb{R}}
\newcommand\bbS{\mathbb{R}^{d\times d}_{\text{sym}}}


\newcommand\cC{\mathcal{C}}
\newcommand\cD{\mathcal{D}}
\newcommand\cF{\mathcal{F}}
\newcommand\cH{\mathcal{H}}
\newcommand\cP{\mathcal{P}}
\newcommand\cT{\mathcal{T}}

\newcommand\cX{\mathcal{X}}

\newcommand\cG{\mathcal{G}}


\newcommand\bn{\boldsymbol{n}}
\newcommand\bF{\boldsymbol{f}}
\newcommand\bu{\boldsymbol{u}}
\newcommand\bv{\boldsymbol{v}}
\newcommand\bw{\boldsymbol{w}}

\newcommand\be{\boldsymbol{e}}

\newcommand\bp{\boldsymbol{p}}
\newcommand\bq{\boldsymbol{q}}

\newcommand\beps{\boldsymbol{\varepsilon}}
\newcommand\bsig{\boldsymbol{\sigma}}
\newcommand\btau{\boldsymbol{\tau}}
\newcommand\bnabla{\boldsymbol{\nabla}}

\newcommand\bchi{\boldsymbol{\chi}}

\newcommand\mt{\mathtt{t}}

\newcommand{\tr}{\operatorname{tr}}
\renewcommand{\div}{\operatorname{div}}
\newcommand{\bdiv}{\operatorname{\mathbf{div}}}


\documentclass[11pt, leqno]{article} 
\usepackage[utf8]{inputenc} 
\usepackage[T1]{fontenc}
\usepackage[english]{babel} 

\usepackage{lmodern} 
\usepackage{stmaryrd} 

\usepackage{amsmath,amsthm,amsfonts,amssymb}

\usepackage{ninecolors}
\usepackage{graphicx}
\usepackage{tabularray}
\usepackage{caption, subcaption}
\usepackage{tikz}

\usepackage{xfrac} 

\usepackage{stmaryrd}
\newcommand{\jump}[1]{{\llbracket{#1}\rrbracket}}
\newcommand{\mmean}[1]{\left\{\kern-1.ex\left\{ #1 \right\}\kern-1.ex\right\}}

\usepackage{mathtools}
\DeclarePairedDelimiterX\norm[1]\lVert\rVert{
   \ifblank{#1}{\:\cdot\:}{#1}
}
\DeclarePairedDelimiterX\abs[1]\lvert\rvert{
   \ifblank{#1}{\:\cdot\:}{#1}
}
\DeclarePairedDelimiter{\inner}{(}{)}
\DeclarePairedDelimiter{\set}{\{}{\}}
\DeclarePairedDelimiter{\dual}{\langle}{\rangle}

\usepackage{siunitx}

\usepackage[singlespacing]{setspace} 

\usepackage{titlesec}
\titleformat{\section}[block]{\filcenter\bfseries}{\thesection.}{1em}{}
\titleformat{\subsection}[block]{\bfseries}{\thesubsection.}{1em}{}

\newtheoremstyle{paper}{}{}{\itshape}{}{\scshape}{.}{.5em}{}
\theoremstyle{paper}
\newtheorem{theorem}{Theorem}
\newtheorem{proposition}{Proposition}
\newtheorem{lemma}{Lemma}

\newtheorem{remark}{Remark}

\usepackage[numbers]{natbib}
\usepackage{natbib}
\setcitestyle{aysep={}}

\setlength{\bibsep}{0pt}
\setlength{\bibhang}{1.5em}
\AtBeginEnvironment{thebibliography}{\setstretch{1.1}}

\usepackage{fancyhdr} 

\usepackage[pdftex, hidelinks, hypertexnames = false, hyperfootnotes = false,
pdfpagemode = UseNone, pdfdisplaydoctitle = true]{hyperref}%

\usepackage[a4paper, tmargin=3cm, bmargin=3cm, rmargin=2.2cm, lmargin=2.2cm]{geometry}


\begin{document}
\title{\huge Hybridizable Discontinuous Galerkin Methods for Coupled Poro-Viscoelastic and Thermo-Viscoelastic Systems}
\author{Salim Meddahi
	\thanks{This research was  supported by Ministerio de Ciencia e Innovación, Spain.}}

\date{}                       


\maketitle

\begin{abstract}

\noindent
This article presents a unified mathematical framework for modeling coupled poro-viscoelastic and thermo-viscoelastic phenomena, formulated as a system of first-order in time partial differential equations. The model describes the evolution of solid velocity, elastic and viscous stress tensors, and additional fields related to either fluid pressure or temperature, depending on the physical context. We develop a hybridizable discontinuous Galerkin  method for the numerical approximation of this coupled system, providing a high-order, stable discretization that efficiently handles the multiphysics nature of the problem. We establish stability analysis and derive optimal $hp$-error estimates for the semi-discrete formulation. The theoretical convergence rates are validated through comprehensive numerical experiments, demonstrating the method's accuracy and robustness across various test cases, including wave propagation in heterogeneous media with mixed viscoelastic properties. 
\end{abstract}
\bigskip

\noindent
\textbf{Mathematics Subject Classification.}  65N30, 65N12, 65N15, 74F10, 74D05, 76S05
\bigskip

\noindent
\textbf{Keywords.}
Poro-viscoelasticity, thermo-viscoelasticity, wave propagation, hybridizable discontinuous Galerkin,  high-order methods, $hp$ error estimates


\section{Introduction}\label{s:introduction}

Porous and thermally responsive viscoelastic media are commonly found in geomechanics, biomechanics, and various engineered systems.  Since Biot \cite{Biot1941} and Terzaghi \cite{terzaghi1943} first quantified the interplay between fluid pressure and solid deformation, poroelasticity has become the cornerstone for modelling soil consolidation, reservoir compaction \cite{russell1983finite, gai2004}, and geothermal  operations \cite{ghassemi2024role}. Incorporating viscoelastic rheology is essential to reproduce secondary consolidation in clays \cite{loula1996} and stress-relaxation in biological tissues such as cartilage and brain matter \cite{mow1980, bociu2023, quarteroni2023}. In earthquake modelling and exploration geophysics, thermo-poro-viscoelastic effects govern wave attenuation and slow-mode diffusion \cite{boukamel2001thermo, cavallini2019, morency}.  These wide-ranging use cases motivate robust numerical tools capable of accurately capturing coupled poro/thermo-viscoelastic dynamics.

Numerical discretizations for coupled poro- and thermoelastic systems have undergone significant advancement over the past two decades. For dynamic problems, a diverse spectrum of computational approaches has been developed and refined. These include classical methods such as finite difference \cite{gaspar2003}, finite element \cite{santos1986II}, boundary element \cite{chen1995}, finite volume \cite{Lemoine2013}, and spectral techniques \cite{morency}.

Recent advancements have substantially expanded these computational methods. Among the most significant developments are stabilized mixed methods \cite{Lee2023} and high-order space-time Galerkin formulations \cite{bause2024}. Advanced polytopal discontinuous Galerkin schemes with adaptive refinement capabilities provide effective solutions for problems involving complex geometries \cite{antonietti2023discontinuous, antoniettiIMA, bonetti2024, bonetti2023numerical}. Similarly, hybridized discontinuous Galerkin (HDG) methods \cite{fu2019, meddahi2025} support adaptive refinement while achieving optimal convergence with a reduced number of coupled degrees of freedom. 

For thermo-viscoelasticity, Bonetti \& Corti \cite{bonetti2024} developed a weighted symmetric interior-penalty DG scheme on general polytopal meshes that handles strong heterogeneities robustly. Their displacement/temperature formulation employs the Kelvin-Voigt viscoelasticity model, which combines elastic and viscous responses. However, this model cannot accurately reproduce finite-time stress-relaxation or creep-recovery phenomena essential in many applications. The more general Zener model proves more suitable for practical situations. However, displacement-based formulations of this model lead to hereditary convolution terms that render the problem non-local in time. To overcome this limitation, we adopt the stress-based formulation presented in \cite{Becache, meddahi2023hp} for the linear viscoelastic Zener's model, which avoids the non-local terms while maintaining the model's advantages.

The present work develops a high-order hybridizable discontinuous Galerkin method for a multi-field formulation that couples a Zener (standard-linear-solid) rheology with either Biot or Maxwell-Cattaneo transport laws. Following \cite{meddahi2023hp}, we adopt a stress-based formulation and rewrite the problem as a first-order system in time. The primary unknowns in this system are the solid velocity, the elastic and viscous stress tensors, along with either pressure and filtration velocity (in the Biot case) or temperature and heat flux vector field (in the Maxwell-Cattaneo case). The proposed HDG scheme is designed for simplicial meshes with hanging nodes, employing hybrid variables for the solid velocity and the gradient of the transport field—either filtration velocity or heat flux. This choice reduces the size of the globally coupled unknowns to the skeletal degrees of freedom and enables static condensation without parameter tuning. The resulting HDG method exhibits uniform energy stability, achieves optimal $hp$-error estimates, and naturally extends to heterogeneous media containing both purely elastic and viscoelastic subdomains—capabilities that, as far as the authors are aware, have not been reported for Zener-type poro/thermo-viscoelastic problems.

The remainder of this article is organized as follows. Section \ref{sec:model} describes the fully dynamic poro-/thermo-viscoelastic system. Section~\ref{sec:variational} formulates the model problem as a first-order time evolution problem in a suitable functional setting and employs the theory of strongly continuous semigroups to prove well-posedness.  In Section \ref{sec:semi-discrete} we introduce the semidiscrete HDG scheme, and in Section \ref{sec:convergence} we establish its stability and derive optimal $hp$-error estimates. Finally, Section \ref{sec:numresults} presents a suite of numerical experiments—ranging from convergence tests to geophysical wave-propagation benchmarks—that confirm our theoretical rates and illustrate the method's performance in heterogeneous viscoelastic media. Appendix~\ref{appendix} gathers the essential $hp$-approximation results on simplicial meshes that underlie our error analysis.

Throughout the rest of this paper, we shall use the letter $C$  to denote generic positive constants independent of the mesh size $h$ and the polynomial degree $k$. These constants may represent different values at different occurrences. Moreover, given any positive expressions $X$ and $Y$ depending on $h$ and $k$, the notation $X \,\lesssim\, Y$  means that $X \,\le\, C\, Y$.

\section{The dynamic and linear poro/thermo-viscoelastic model problem}\label{sec:model} 

Let $\Omega \subset \bbR^d$ be a bounded polygonal or polyhedral domain, where $d \in\set{2,3}$ denotes the spatial dimension. Consider the following coupled system of partial differential equations on the time interval \((0,T]\), where \(T>0\) denotes the final time:
\begin{subequations}\label{eq:poro_thermo_visco}
   \begin{align}
     \rho \dot{\bu} - \bdiv \bigl(\bsig - \alpha \psi I_d \bigr)  &=  \bF
     \quad  \text{in } \Omega \times (0,T], \label{eq:ptv-a}
     \\
     \dot{\bsig}_E  &= \cC \beps(\bu)
     \quad \text{in } \Omega \times (0,T], \label{eq:ptv-b}
     \\
     \omega\,\dot{\bsig}_V  +   \bsig_V  &= (\cD  - \cC)\,\beps(\bu)
     \quad \text{in } \Omega \times (0,T], \label{eq:ptv-c}
     \\
     \chi\,\dot{\bp}  +  \beta\,\bp  +  \bnabla \psi  &= \mathbf{0}
     \quad \text{in } \Omega \times (0,T], \label{eq:ptv-d}
     \\
     s \dot{\psi}  +  \div\bp  +  \alpha \div\bu &= g
     \quad \text{in } \Omega \times (0,T]. \label{eq:ptv-e}
   \end{align}
\end{subequations}
Here, $I_d \in \mathbb{R}^{d \times d}$ is the identity in the space of real matrices $\mathbb{R}^{d\times d}$ and $\beps(\bu):= \frac{1}{2}\left[\bnabla\bu+(\bnabla\bu)^{\mt}\right] : \Omega \to \mathbb{R}^{d\times d}_{\text{sym}}$ is the linearized strain tensor, where $\mathbb{R}^{d \times d}_{\text{sym}} \coloneq \{\btau \in \mathbb{R}^{d \times d} \mid \btau = \btau^{\mt}\}$.

The coupled system \eqref{eq:poro_thermo_visco} presents a unified mathematical framework that captures both poro-viscoelastic and thermo-viscoelastic phenomena through a stress-velocity formulation, following the approach of \cite{Becache, meddahi2023hp}. This stress-velocity formulation  for the solid eliminates time-domain convolution terms, thereby offering advantages in both theoretical analysis and numerical implementation, see \cite{meddahi2023hp} for more details. 

Equation \eqref{eq:ptv-a} represents the momentum balance for the solid phase. In this equation, $\bu$ denotes the velocity field, and the effective stress tensor $\bsig$ follows the Zener viscoelastic model, decomposed as $\bsig = \bsig_E + \omega \, \bsig_V$, where $\omega > 0$ is the relaxation time. The term $\bF:\Omega\times[0, T] \to \bbR^d$ accounts for external body forces. The density parameter $\rho$ has different interpretations: in thermoelasticity, it represents the solid density $\rho_S$, while in poroelasticity, it is the composite density $\rho = \phi \rho_F + (1 - \phi) \rho_S$, where $\phi$ is the porosity, and $\rho_F$ and $\rho_S$ are the fluid and solid densities, respectively.
   
The constitutive laws governing the evolution of the elastic and viscous stress components are given by equations \eqref{eq:ptv-b} and \eqref{eq:ptv-c}. These involve the symmetric and positive definite fourth-order tensors $\cC$ and $\cD$, with the condition that $\cD - \cC$ is also positive definite to ensure energy dissipation. The coupling between the solid mechanics and the secondary field (fluid or thermal) is established through the term $\alpha \, \psi \, I_d$ in the momentum equation, where $\psi$ represents either the fluid pressure or the temperature.
   
The physical interpretation of equation \eqref{eq:ptv-d} varies with the context. In poroelasticity, it corresponds to Darcy's law, describing the filtration velocity $\bp$ of the interstitial fluid, with $\beta = \eta/\kappa$ representing the ratio of fluid viscosity to permeability, and $\chi = \rho_F/\phi$ relating to the effective fluid mass. The coefficient $\alpha \in (0, 1]$ is the Biot-Willis coefficient, quantifying the coupling between the solid deformation and fluid pressure. Alternatively, in thermoelasticity, equation \eqref{eq:ptv-d} represents the Maxwell-Vernotte-Cattaneo law for heat conduction, where $\chi = \gamma/\theta$ and $\beta = 1/\theta$, with $\gamma>0$ being the thermal relaxation time, $\theta$ the effective thermal conductivity, and $\alpha > 0$ the thermal stress coefficient.
   
Finally, equation \eqref{eq:ptv-e} expresses a linearized conservation principle: mass conservation for the fluid in poroelasticity or energy conservation in thermoelasticity. The parameter $s$ represents either the specific storage coefficient or the thermal dilatation effect, while $g$ denotes the corresponding source term (fluid injection/extraction or heat generation).

To complete the problem formulation, we specify the initial conditions at time \(t=0\):
\begin{equation}\label{eq:IC}
   \bu(0) = \bu^0,\quad \bsig_E(0) = \bsig_E^0, \quad \bsig_V(0) = \bsig_V^0, \quad \bp(0) = \bp^0, \quad \text{and}\quad \psi(0) = \psi^0 \quad \text{in $\Omega$},
\end{equation}
where \(\bu^0,\bsig_E^0,\bsig_V^0,\bp^0,\psi^0\) are given initial data defined on the domain \(\Omega\).

Finally, we impose mixed Dirichlet and Neumann boundary conditions. Let \( \partial\Omega = \Gamma^u_D\cup\Gamma^u_N\) and \(\partial\Omega = \Gamma^\psi_D\cup\Gamma^\psi_N\) be two partitions of the boundary \(\partial\Omega\) into disjoint Dirichlet and Neumann parts, respectively, with the condition that the Dirichlet boundary sets \(\Gamma^u_D\) and \(\Gamma^\psi_D\) have positive measure. Denoting by \(\bn\) the outward unit normal vector on \(\partial \Omega\), the boundary conditions are given by:
\begin{equation}\label{eq:BC}
   \begin{aligned}
      \bu &= \mathbf 0 \quad \text{on $\Gamma^u_D\times (0, T]$},
      &\quad (\bsig - \alpha \psi I_d)\bn &= \mathbf 0 \quad \text{on $\Gamma^u_N\times (0, T]$},
      \\
        \psi &= 0 \quad \text{on $\Gamma^\psi_D\times (0, T]$}, &\quad
      \bp\cdot \bn &=  0 \quad \text{on $\Gamma^\psi_N\times (0, T]$}.
   \end{aligned}
\end{equation}

\section{The continuous problem}\label{sec:variational}

\subsection{Notation and functional spaces}
For any $m, n \in \mathbb{N}$ and $s\in \bbR$, $H^s(D, \mathbb{R}^{m\times n})$ represents the usual Hilbertian Sobolev space of functions with domain $D$ and values in $\mathbb{R}^{m\times n}$. In the case $m=n=1$, we simply write $H^s(D)$. The norm of $H^s(D, \mathbb{R}^{m\times n})$ is denoted by $\norm{\cdot}_{s,D}$ and the corresponding semi-norm is written as $|\cdot|_{s,D}$. We adopt the convention  $H^0(D, \mathbb{R}^{m\times n}):=L^2(D,\mathbb{R}^{m\times n})$ and let $(\cdot, \cdot)_D$ be the inner product in $L^2(D, \mathbb{R}^{m\times n})$, i.e.,
\begin{equation*}
	  \inner{\bsig, \btau}_D \coloneq  \int_D \bsig:\btau,\quad \forall \bsig, \btau\in L^2(D,\mathbb{R}^{m\times n}),
\end{equation*} 
where the component-wise inner product of two matrices $\bsig = (\sigma_{ij})$ and $\btau = (\tau_{ij}) \in \mathbb{R}^{m \times n}$ is given by $\bsig : \btau \coloneq \sum_{i,j} \sigma_{ij} \tau_{ij}$.

To establish a suitable framework for formulating the first-order system in time \eqref{eq:poro_thermo_visco}, we define the pivot Hilbert spaces $\mathcal{H}_1\coloneq [L^2(\Omega, \mathbb R^{d})]^2$ and $\mathcal{H}_2 \coloneq  [L^2(\Omega,\mathbb{R}^{d\times d}_{\text{sym}})]^2 \times   L^2(\Omega)$. We equip $\mathcal{H}_1$ with the inner product $\inner{ (\bu, \bp), (\bv, \bq) }_{\mathcal{H}_1} \coloneq  \inner{\rho\bu, \bv}_{\Omega}+ \inner{\chi\bp, \bq}_{\Omega}$. Consequently, the corresponding norm is defined as $\norm{(\bv, \bq)}^2_{\mathcal{H}_1} \coloneq  \inner{ (\bv, \bq), (\bv, \bq) }_{\mathcal{H}_1}$ and it satisfies 
\begin{equation}\label{bound:rho}
   \rho^-  (\norm{\bv}^2_{0,\Omega} + \norm{\bq}_{0,\Omega}^2) \leq 	\norm{(\bv, \bq)}^2_{\mathcal{H}_1} \leq \rho^+  (\norm{\bv}^2_{0,\Omega} + \norm{\bq}_{0,\Omega}^2) \quad \forall (\bv, \bq) \in \mathcal{H}_1,
\end{equation}  
with $\rho^- = \min\{\rho, \chi\}$ and $\rho^+ = \max\{\rho, \chi\}$.
Additionally, we define $\mathcal{A} \coloneq  \mathcal{C}^{-1}$ and $\cG \coloneq  (\cD - \cC)^{-1}$, and equip $\mathcal{H}_2$ with the inner product given by 
\[
\inner{ (\bsig_E, \bsig_V, \psi), (\btau_E, \btau_V, \varphi) }_{\mathcal{H}_2} \coloneq   \inner{\mathcal{A}\bsig_E, \btau_E}_\Omega +\inner{\omega\mathcal{G}\bsig_V, \omega\btau_V}_\Omega + \inner{s \psi, \varphi }_\Omega,
\]  
for all $(\bsig_E, \bsig_V, \psi), (\btau_E, \btau_V, \varphi) \in \mathcal{H}_2$.
Due to our assumptions on $\cC$ and $\cD$, there exist constants $a^+ > a^- >0$ such that the norm $\norm{(\btau_E, \btau_V, \varphi) }^2_{\mathcal{H}_2}  \coloneq   \inner{\mathcal{A}\btau_E, \btau_E}_\Omega + \inner{\omega\mathcal{G}\btau_V, \omega\btau_V}_\Omega  + \inner{s \varphi,\varphi }_\Omega$ satisfies 
\begin{equation}\label{normH}
	a^- \left( \norm{\btau_E}^2_{0, \Omega} + \norm{\btau_V}^2_{0, \Omega}  + \norm{\varphi}^2_{0, \Omega}  \right) \leq \norm{(\btau_E, \btau_V, \varphi)  }_{\cH_2}^2 \leq a^+ \left( \norm{\btau_E}^2_{0, \Omega} + \norm{\btau_V}^2_{0, \Omega}  + \norm{\varphi}^2_{0, \Omega}  \right),
\end{equation} 
for all $(\btau_E, \btau_V, \varphi)    \in \mathcal{H}_2$.

To enforce the essential boundary conditions \eqref{eq:BC} within the energy space for the solid velocity, we introduce the closed subspace of $H(\div, \Omega) \coloneq \set{\bv \in L^2(\Omega, \bbR^d);\ \div \bv \in L^2(\Omega)}$ given by 
\[
H_N(\div, \Omega) := \set{ \bv\in H(\div, \Omega); \quad \dual{\bv\cdot \bn,q}_{\Gamma}= 0 	\quad \forall q\in H^1_D(\Omega) }.
\]
It consists of vector fields in $H(\div, \Omega)$ with a free normal component on $\Gamma^\psi_N$. Here,  $H^1_D(\Omega):=\set{\varphi\in H^1(\Omega);\ \varphi|_{\Gamma^\psi_D} = 0}$, and $\dual{\cdot, \cdot}_\Gamma$ represents the duality pairing between $H^{\sfrac12}(\Gamma)$ and $H^{-\sfrac12}(\Gamma)$. Similarly, we let $H^1_D(\Omega,\bbR^d):=\set{\bv\in H^1(\Omega,\bbR^d);\ \bv|_{\Gamma^u_D} = \mathbf 0}$ and define
\[
H_N(\bdiv, \Omega, \mathbb{R}^{d\times d}_{\text{sym}}) := \set{ \btau\in H(\bdiv, \Omega, \mathbb{R}^{d\times d}_{\text{sym}}); \quad 
	\left\langle\btau\bn,\bv\right\rangle_{\Gamma}= 0 
	\quad \forall\bv\in H^1_D(\Omega,\bbR^d) },
\]
as the closed subspace of $H(\bdiv, \Omega, \mathbb{R}^{d\times d}_{\text{sym}}) \coloneq \set{\btau \in L^2(\Omega, \mathbb{R}^{d\times d}_{\text{sym}});\ \bdiv \btau \in L^2(\Omega, \bbR^d)}$  satisfying a stress--free boundary condition on $\Gamma^u_N$.

\subsection{Abstract formulation}
Collecting the unknowns in $\mathbf y(t):= (\bsig_E, \bsig_V, \psi, \bu, \bp)^{\mt}$, system \eqref{eq:poro_thermo_visco}-\eqref{eq:IC} rewrites compactly as
\begin{align}\label{eq:IVP}
    \begin{split}
        \dot{\mathbf{y}}(t) &= A \mathbf{y}(t) + \mathbf F(t) \quad \text{in $\cH_1 \times \cH_2$},
   \\
   \mathbf y(0) &= \mathbf y^0
    \end{split}
\end{align}
where $\mathbf F(t) \coloneq (\tfrac{1}{\rho}\bF, \mathbf 0, \mathbf 0, \mathbf 0,  \tfrac{1}{s}g)$ and $\mathbf y^0 \coloneq  (\bu^0, \bp^0, \bsig_E^0, \bsig_V^0, \psi^0)^{\mt}$. 

The infinitesimal generator $A$ of the strongly continuous semigroup on $\cH_1 \times \cH_2$ associated with the evolution system \eqref{eq:IVP} is given by
\begin{align*}
A\left( \bu,\bp , \bsig_E, \bsig_V, \psi  \right) &\coloneq 
\Big( A_1(\bsig_E, \bsig_V, \psi) + (0,\tfrac{\beta}{\chi}\bp),\ A_2 (\bu,\bp) + (0, \tfrac{1}{\omega} \bsig_V , 0) \Big) 
\\
A_1(\bsig_E, \bsig_V, \psi) &\coloneq  \Big( - \tfrac{1}{\rho}\bdiv(\bsig_E + \omega\bsig_V - \alpha \psi \mathrm{I}_d),\ \tfrac{1}{\chi} \nabla \psi  \Big)
    \\
    A_2 (\bu,\bp) &\coloneq \Big(- \mathcal{C} \beps(\bu),\ -\tfrac{1}{\omega}(\cD - \cC)\beps(\bu),\ \tfrac{1}{s} ( \div \bp +\alpha \div \bu )  \Big).
\end{align*}

The domain of the operator $A$ is the subspace  $\mathcal{X}_1 \times \mathcal{X}_2 \subset \mathcal{H}_1 \times \mathcal{H}_2$, where  
\begin{align*}
	\mathcal{X}_1 &\coloneq H_D^1(\Omega,\mathbb{R}^d) \times H_N(\div,\Omega)
	\\
	\mathcal{X}_2 &\coloneq \set*{ (\btau_E, \btau_V, \varphi) \in [L^2(\Omega,\mathbb{R}^{d\times d}_{\text{sym}})]^2\times  H_D^1(\Omega);\quad     \btau_E +\omega \btau_V - \alpha \varphi I_d  \in H_N(\bdiv, \Omega, \mathbb{R}^{d\times d}_{\text{sym}})  }.
\end{align*}
We endow $\mathcal{X}_1$ and $\mathcal{X}_2$ with the inner products
\begin{align*}
&\inner*{(\bu,\bp) , (\bv,\bq)  }_{\mathcal{X}_1} \coloneq   \inner*{(\bu,\bp) , (\bv,\bq)  }_{\mathcal{H}_1} + \inner*{\beps(\bu), \beps(\bv)}_\Omega + \inner*{\tfrac{1}{s}(\div\bp + \alpha\div \bu), \div\bq + \alpha\div \bv}_\Omega,
\\
   &\inner*{ (\bsig_E, \bsig_V, \psi), (\btau_E, \btau_V, \varphi) }_{\mathcal{X}_2} 
   \coloneq \inner*{ (\bsig_E, \bsig_V, \psi), (\btau_E, \btau_V, \varphi) }_{\mathcal{H}_2} 
   \\
    &\qquad + \inner*{ \tfrac{1}{\rho} \bdiv(\bsig_E +\omega \bsig_V - \alpha \psi I_d ),    \bdiv(\btau_E +\omega \btau_V - \alpha \varphi I_d)  }_\Omega
   + \inner*{ \tfrac{1}{\chi} \nabla \psi ,  \nabla \varphi  }_\Omega,
\end{align*}
respectively. The completeness of the spaces $\mathcal{X}_1$ and $\mathcal{X}_2$ when equipped with the norms $\norm{\cdot}_{\mathcal{X}_1}$ and $\norm{\cdot}_{\mathcal{X}_2}$, respectively, can be established following the approach of \cite[Section 3.2]{meddahi2025}.

\subsection{Well-posedness analysis}
For a separable Banach space $V$, we denote by $\mathcal{C}^0_{[0,T]}(V)$ the Banach space of continuous functions $f: [0,T] \to V$. Moreover, $\mathcal{C}^1_{[0,T]}(V)$ represents the subspace of $\mathcal{C}^0_{[0,T]}(V)$ consisting of functions $f$ with strong derivatives $\frac{d f}{dt}$ in $\mathcal{C}^0_{[0,T]}(V)$.

The Green formulas
\begin{equation}\label{green}
	\inner{\bv, \nabla \varphi}_\Omega + \inner{\div \bv, \varphi}_\Omega = 0
	\quad \text{and} \quad 
	\inner{\btau, \beps(\bw)}_\Omega + \inner{\bdiv \btau, \bw}_\Omega = 0,
\end{equation}
hold for arbitrary pairs $(\bv, \varphi) \in H_N(\div, \Omega)\times H^1_D(\Omega) $ and $(\btau, \bw) \in H_N(\bdiv, \Omega, \mathbb{R}^{d\times d}_{\text{sym}})\times H^1_D(\Omega, \bbR^d)$. They permit us to obtain the identity
\begin{equation}\label{eq:monotone0}
	\inner*{ A\left( (\bu,\bp) , (\bsig_E, \bsig_V, \psi)  \right),  \left( (\bu,\bp) , (\bsig_E, \bsig_V, \psi)  \right) }_{\mathcal{H}_1 \times \mathcal{H}_2} 
   = \inner{\cG \bsig_V, \omega\bsig_V}_\Omega + \inner{\beta \bp, \bp}_\Omega \geq 0,
\end{equation}
which holds for all $(\bu, \bp) \in \mathcal{X}_1$ and $(\bsig_E, \bsig_V, \psi) \in \mathcal{X}_2$. This identity proves that the operator $A: \mathcal{X}_1 \times \mathcal{X}_2 \to \mathcal{H}_1 \times \mathcal{H}_2$ is monotone. To establish the well-posedness of the initial boundary value problem \eqref{eq:IVP} via the Hille--Yosida theorem, we must  prove the surjectivity of $I_{\mathcal{X}_1 \times \mathcal{X}_2} + A: \mathcal{X}_1 \times \mathcal{X}_2 \to  \mathcal{H}_1 \times \mathcal{H}_2$, where $I_{\mathcal{X}_1 \times \mathcal{X}_2}$ denotes the identity operator in $\mathcal{X}_1 \times \mathcal{X}_2$. The proof follows arguments similar to those in \cite[Lemma 1]{meddahi2025}.

\begin{theorem}\label{thm:Hille-Yosida}
For all $\mathbf{F}  \in \mathcal{C}^1_{[0,T]}(\cH_1 \times \cH_2)$ and $\mathbf y^0 \in \mathcal{X}_1\times \mathcal{X}_2$, there exists a unique solution $\mathbf y \in \mathcal{C}^1_{[0,T]}(\cH_1 \times \cH_2) \cap \mathcal{C}^0_{[0,T]}(\mathcal{X}_1\times \mathcal{X}_2)$ to the initial boundary value problem \eqref{eq:IVP}. Moreover, there exists a constant $C>0$ such that  
\begin{equation}\label{eq:stab}
      \max_{[0,T]}\norm{\mathbf y(t)}_{\mathcal{H}_1 \times \cH_2} \leq C \max_{[0,T]}\norm{\mathbf F(t)}_{0,\Omega} +   \norm{\mathbf{y}^0}_{\mathcal{H}_1\times \cH_2}.
\end{equation}
\end{theorem}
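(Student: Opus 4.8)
The plan is to apply the Hille--Yosida theorem in its maximal monotone formulation. The monotonicity of $A$ is already secured by the identity \eqref{eq:monotone0}, so the only missing ingredient is the range (maximality) condition: that $I_{\cX_1\times\cX_2}+A$ maps $\cX_1\times\cX_2$ onto $\cH_1\times\cH_2$. Once this is in hand, $A$ is maximal monotone, the abstract Cauchy problem \eqref{eq:IVP} admits a unique solution with the claimed regularity $\mathbf y\in\mathcal{C}^1_{[0,T]}(\cH_1\times\cH_2)\cap\mathcal{C}^0_{[0,T]}(\cX_1\times\cX_2)$, and the a priori bound \eqref{eq:stab} follows separately from an energy argument. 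I expect essentially all of the work to be concentrated in the range condition; the remainder is a direct appeal to the semigroup theory, mirroring \cite[Lemma 1]{meddahi2025}.

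To verify surjectivity I would fix data $\mathbf{f}=(\bF_u,\bF_p,\bF_E,\bF_V,f_\psi)\in\cH_1\times\cH_2$ and seek $\mathbf y=(\bu,\bp,\bsig_E,\bsig_V,\psi)$ solving $(I+A)\mathbf y=\mathbf{f}$. The components governing $\bsig_E$, $\bsig_V$ and $\bp$ carry no derivatives of those unknowns, so they can be solved pointwise, giving $\bsig_E=\bF_E+\cC\beps(\bu)$, $\bsig_V=\tfrac{\omega}{1+\omega}\bF_V+\tfrac{1}{1+\omega}(\cD-\cC)\beps(\bu)$ and $\bp=\tfrac{\chi}{\chi+\beta}\bF_p-\tfrac{1}{\chi+\beta}\bnabla\psi$. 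Inserting these expressions into the two remaining equations---and integrating by parts so that the $L^2$ data $\bF_E,\bF_V,\bF_p$ are never differentiated---collapses the system to an effective second-order problem for the pair $(\bu,\psi)$ alone. Its elastic block is driven by the symmetric positive-definite tensor $\cC_{\mathrm{eff}}:=\tfrac{1}{1+\omega}\cC+\tfrac{\omega}{1+\omega}\cD$, and its scalar block by the diffusion coefficient $\tfrac{1}{\chi+\beta}$ together with a reaction term proportional to $s$.

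I would then pose the reduced problem weakly on $H^1_D(\Omega,\bbR^d)\times H^1_D(\Omega)$ and solve it by the Lax--Milgram lemma. The crucial structural point is that the poro-/thermo-elastic coupling is skew-symmetric: the term coupling $\psi$ into the momentum test function and the term coupling $\div\bu$ into the scalar test function are mutually transposed with opposite signs, so they cancel identically when one tests against the argument $(\bu,\psi)$ itself. Coercivity then reduces to coercivity of the two decoupled diagonal forms, which holds because the elastic form is $H^1$-coercive by Korn's inequality (using that $\Gamma^u_D$ has positive measure) while the scalar form is $H^1$-coercive through its reaction term and full Dirichlet form (with $\Gamma^\psi_D$ of positive measure). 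Boundedness of both forms being immediate, Lax--Milgram yields a unique $(\bu,\psi)$.

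The step I expect to be the main obstacle is the last one: reconstructing $(\bsig_E,\bsig_V,\bp)$ from the algebraic relations and confirming that the assembled $\mathbf y$ lies in the graph domain $\cX_1\times\cX_2$, and not merely in $\cH_1\times\cH_2$. This domain encodes both the extra divergence regularity $\div\bp\in L^2(\Omega)$ and $\bdiv(\bsig_E+\omega\bsig_V-\alpha\psi I_d)\in L^2(\Omega,\bbR^d)$, and the homogeneous normal and traction conditions implicit in $H_N(\div,\Omega)$ and $H_N(\bdiv,\Omega,\bbS)$. Both are read off from the variational identities: testing against compactly supported fields identifies the distributional divergences with the $L^2$ right-hand sides produced by the $\bu$- and $\psi$-equations, while testing against fields that do not vanish on the Neumann parts recovers, through the Green formulas \eqref{green}, the free normal and traction conditions as the natural boundary conditions of the weak form. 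This closes the range condition. Finally, for \eqref{eq:stab} I would test \eqref{eq:IVP} against $\mathbf y(t)$ in $\cH_1\times\cH_2$, use \eqref{eq:monotone0} to control the contribution of $A$ through its sign, bound the forcing term by Cauchy--Schwarz together with the norm equivalences \eqref{bound:rho}--\eqref{normH}, and integrate in time with Gr\"onwall's lemma to reach the asserted bound on $[0,T]$.
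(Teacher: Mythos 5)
Your proposal is correct and follows essentially the same route as the paper: monotonicity via \eqref{eq:monotone0}, maximality through the surjectivity of $I+A$ (which the paper delegates to \cite[Lemma~1]{meddahi2025} and which you carry out explicitly by eliminating $\bsig_E,\bsig_V,\bp$ algebraically, solving the reduced skew-coupled $(\bu,\psi)$ problem by Lax--Milgram with $\cC_{\mathrm{eff}}=\tfrac{1}{1+\omega}\cC+\tfrac{\omega}{1+\omega}\cD$, and recovering the graph-domain regularity and natural boundary conditions from the weak identities), and finally the Hille--Yosida theorem plus a standard energy estimate for \eqref{eq:stab}. All the explicit formulas and the coercivity/cancellation structure you identify check out against the operator $A$ as defined in the paper.
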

\begin{proof}
See \cite[Theorem 76.7]{ErnBook2021III} for a detailed proof.
\end{proof}

We define the bounded bilinear form $B: \cX_1 \times \cX_2 \to \bbR$ as
\[
B( (\bv, \bq), (\btau_E, \btau_V, \varphi)) \coloneq   \inner{\bdiv(\btau_E + \omega\btau_V - \alpha \varphi I_d), \bv}_\Omega  
-  \inner{ \nabla \varphi ,\bq}_\Omega.
\]
In the following sections, we will develop a finite element discretization method based on the following weak form of \eqref{eq:IVP}: Find $(\bu, \bp) \in \mathcal{C}_{[0,T]}^1(\cH_1) \cap \mathcal{C}^0_{[0,T]}(\cX_1)$ and  $(\bsig_E, \bsig_V, \psi) \in \mathcal{C}^1_{[0,T]}(\cH_2) \cap \mathcal{C}^0_{[0,T]}(\cX_2)$ satisfying 
\begin{align}\label{eq:weakform}
	\begin{split}
		\inner*{(\dot\bu, \dot\bp) , (\bv, \bq)}_{\cH_1} + \inner*{\beta\bp,\bq}_\Omega  -B( (\bv, \bq), (\bsig_E, \bsig_V, \psi))
		&= \inner*{\bF, \bv}_\Omega 
		\\
	\inner*{(\dot\bsig_E, \dot\bsig_V, \dot\psi), (\btau_E, \btau_V, \varphi) }_{\cH_2} + \inner*{\cG \bsig_V, \omega\btau_V}_\Omega
	+  B( (\bu, \bp), (\btau_E, \btau_V, \varphi))
	&= \inner*{g, \varphi}_\Omega,
	\end{split}
\end{align}
for all $(\bv, \bq)\in \cX_1$ and all $(\btau_E, \btau_V, \varphi)\in \cX_2$, and subject to the initial conditions given in \eqref{eq:IVP}. 

\section{The semi-discrete problem}\label{sec:semi-discrete}

For the sake of simplicity, from now on we assume that $\Gamma^u_D = \Gamma^\psi_D = \Gamma$. As a result, the boundary conditions \eqref{eq:BC} become 
\begin{equation}\label{newBC}
		\bu = \mathbf 0  \quad \text{and} 
	\quad 
	\psi = 0 \quad \text{on $ \Gamma\times (0, T]$}.
\end{equation}   

Let $\cT_h$ denote a shape-regular mesh of the domain $\bar \Omega$, consisting of tetrahedra and/or parallelepipeds in three dimensions ($d=3$), or triangles and/or quadrilaterals in two dimensions ($d=2$). The mesh $\cT_h$ may contain hanging nodes. For each element $K \in \cT_h$, we denote its diameter by $h_K$, and define the mesh size parameter $h := \max_{K\in \cT_h} \{h_K\}$.

We say that a closed subset $F\subset \overline{\Omega}$ is an interior edge/face if it has positive $(d-1)$-dimensional measure and arises as the intersection of two distinct elements, that is, $F =\bar K\cap \bar K'$ for some $K, K' \in \cT_h$. Similarly, a closed subset $F\subset \overline{\Omega}$ is termed a boundary edge/face if it constitutes both an edge/face of some element $K \in \cT_h$ and part of the domain boundary, namely $F =  \bar K\cap \partial \Omega$. We denote by $\cF_h^0$ and $\cF_h^\partial$ the collections of interior and boundary edges/faces respectively, and set $\cF_h = \cF_h^0\cup \cF_h^\partial$. 

For any edge/face $F\in\cF_h$, we denote its diameter by $h_F$. The mesh $\cT_h$ is assumed to be locally quasi-uniform with constant $\gamma>0$, meaning that for all mesh sizes $h$ and all elements $K\in \cT_h$:
\begin{equation}\label{reguT}
	h_F \leq h_K\leq \gamma h_F\quad \forall F\in \cF(K),
\end{equation}
where $\cF(K)$ denotes the set of edges/faces of element $K$. This local quasi-uniformity is invoked for the projection error estimate of Lemma~\ref{maintool2}.

\subsection{Broken Sobolev spaces}

For any non-negative real number $s\geq 0$, we define the broken Sobolev space with respect to the partition $\cT_h$ of $\bar \Omega$ as
\[
 H^s(\cT_h,E):=
 \set{\bv \in L^2(\Omega, E): \quad \bv|_K\in H^s(K, E)\quad \forall K\in \cT_h},\quad \text{for $E \in \set{ \bbR, \bbR^d, \bbS}$}. 
\]
For consistency with earlier notation, we write $H^0(\cT_h,E) = L^2(\cT_h,E)$ and simplify $H^s(\cT_h,\bbR)$ to $H^s(\cT_h)$. We introduce on $L^2(\cT_h, E)$, with $E\in \set{\bbR, \bbR^d, \bbS}$, the inner product
\[
 \inner{\psi, \varphi}_{\cT_h} := \sum_{K\in \cT_h} \inner{\psi, \varphi}_{ K}\quad \forall  \psi, \varphi \in L^2(\cT_h, E),
\]
and denote its induced norm by $\norm{\psi}^2_{0,\cT_h}:= \inner{\psi, \psi}_{\cT_h}$. 

Next, we consider the set $\partial \cT_h :=\set{\partial K;\ K\in \cT_h}$ containing all element boundaries. On the space $L^2(\partial \cT_h,\bbR^d)$ of vector-valued functions that are square-integrable on each $\partial K\in \partial \cT_h$, we define the inner product and its associated norm:
\[
  \dual{\bu, \bv}_{\partial \cT_h} := \sum_{K\in \cT_h} \dual{\bu, \bv}_{\partial K}, 
  \quad \text{and} \quad
  \norm{\bv}^2_{0, \partial \cT_h}:= \dual{\bv, \bv}_{\partial \cT_h}
  \quad
  \forall  \bu, \bv\in L^2(\partial \cT_h,\bbR^d),
\]
where $\dual{\bu, \bv}_{\partial K} := \sum_{F\in \cF(K)} \int_F\bu\cdot \bv$. 

Finally, we consider the space $L^2(\cF_h,\bbR^d):= \prod_{F\in \mathcal{F}_h} L^2(F, \bbR^d)$, equipped with the inner product
\[
(\bu, \bv)_{\cF_h} := \sum_{F\in \cF_h} \int_F\bu\cdot \bv \quad \forall \bu,\bv\in L^2(\cF_h,\bbR^d),
\]
and its corresponding norm $\norm*{\bv}^2_{0,\cF_h}:= (\bv,\bv)_{\cF_h}$. 

It is important to keep in mind that functions in $L^2(\partial \cT_h,\bbR^d)$ carry two distinct values on each interior face $F$ (one from each adjacent element). In contrast, functions in $L^2(\cF_h,\bbR^d)$ take a unique value on each face $F$.

To formulate the HDG method, we need to introduce product spaces that accommodate both volume and trace variables. For $r>\sfrac12$, we define:
\begin{align*}
	\mathcal{Q} &\coloneq \left\{ \overrightarrow{\bq} = (\bq, \hat\bq) \in [H(\div,\cT_h)\cap H^r(\cT_h,\mathbb{R}^d)] \times L^2(\cF_h,\mathbb{R}^{d}) \right\},
\\
\mathcal{U} &\coloneq \left\{\overrightarrow{\bv} = (\bv, \hat\bv) \in H^1(\cT_h,\mathbb{R}^d) \times L^2(\cF^0_h,\mathbb{R}^{d}) \right\},
\end{align*}
where $L^2(\cF^0_h,\mathbb{R}^{d})$ denotes the space of vector-valued functions in $L^2(\cF_h,\mathbb{R}^{d})$ that vanish on boundary faces:
\[
L^2(\cF^0_h,\mathbb{R}^{d}) \coloneq \left\{\boldsymbol{\phi}\in L^2(\cF_h,\mathbb{R}^{d}) \mid \boldsymbol{\phi}|_F = \mathbf{0},\ \forall F\in \cF_h^\partial \right\}.
\]

We define the jumps $\jump{\overrightarrow{\bq}} = \bq - \hat{\bq}$ and $\jump{\overrightarrow{\bv}} = \bv - \hat{\bv}$, and equip the spaces $\mathcal{Q}$  and $\mathcal{U}$ with the semi-norms:
\begin{equation}\label{norm_spaces}
\abs*{\overrightarrow{\bq}}^2_{\mathcal{Q}} = \norm{\div \bq}^2_{0,\cT_h} + \norm{\tfrac{k+1}{h_\cF^{\sfrac{1}{2}}}\jump{\overrightarrow{\bq}}}^2_{0, \partial \cT_h}
\quad \text{and} \quad 
\abs*{\overrightarrow{\bv}}^2_{\mathcal{U}} = \norm{\beps(\bv)}^2_{0,\cT_h} + \norm{\tfrac{k+1}{h_\cF^{\sfrac{1}{2}}}\jump{\overrightarrow{\bv}}}^2_{0, \partial \cT_h},
\end{equation}
where $h_\cF\in \cP_0(\cF_h)$ is the piecewise constant function defined by $h_\cF|_F = h_F$ for all $F \in \cF_h$.

\subsection{The HDG method}
We denote by $\cP_m(D)$ the space of polynomials of degree at most $m\geq 0$ on $D$ if $D$ is a triangle/tetrahedron, and the space of polynomials of degree at most $m$ in each variable  if $D$ is a quadrilateral/parallelepiped. For vector and tensor-valued polynomials, we write $\cP_m(D, E)$ to denote the space of $E$-valued functions whose components belong to $\cP_m(D)$, where $E$ is either $\bbR^d$ or $\bbS$. On the mesh $\cT_h$, we define the space of piecewise-polynomial functions as
\[
 \cP_m(\cT_h) := 
 \set{ v\in L^2(\cT_h): \ v|_K \in \cP_m(K),\ \forall K\in \cT_h }.
 \]
Similarly, on the skeleton $\cF_h$, we introduce
 \[
 \cP_m(\cF_h) := 
 \set{ \hat\bv\in L^2(\cF_h): \ \hat\bv|_F \in \cP_m(F),\ \forall F\in \cF_h }.
 \]

For vector and tensor-valued piecewise polynomials, we use the notation $\cP_m(\cT_h, E)$ to denote the subspace of $L^2(\cT_h, E)$ whose components belong to $\cP_m(\cT_h)$, where $E\in \set{\bbR^d, \bbS}$. Analogously, $\cP_m(\cF_h, \bbR^d)$ represents the subspace of $L^2(\cF_h, \bbR^d)$ with components in $\cP_m(\cF_h)$.

On element boundaries, we define
\[
  \cP_m(\partial \cT_h, \bbR^d) := \set*{\bv\in L^2(\partial \cT_h, \bbR^d);\ \bv|_{\partial K}\in  \cP_m(\partial K, \bbR^d),\ \forall K\in \cT_h},
\]
where $\cP_m(\partial K, \bbR^d)$ is the product space $\prod_{F\in \cF(K)} \cP_m(F, \bbR^d)$.

For $k\geq 0$, we introduce the finite-dimensional subspaces of $\mathcal{H}_1$ and $\mathcal{H}_2$ given by
\[
	\mathcal{H}_{1,h} \coloneq  \cP_{k+1}(\cT_h,\bbR^{d})\times \cP_{k+1}(\cT_h,\bbR^{d})\quad \text{and} \quad \mathcal{H}_{2,h} \coloneq \cP_{k}(\cT_h,\mathbb{R}^{d\times d}_{\text{sym}}) \times \cP_{k}(\cT_h,\mathbb{R}^{d\times d}_{\text{sym}}) \times \cP_{k}(\cT_h),
\]
respectively. We also define the discrete counterparts of spaces $\mathcal{Q}$ and $\mathcal{U}$ as:
\[
\mathcal{Q}_h \coloneq \cP_{k+1}(\cT_h,\bbR^{d}) \times \cP_{k+1}(\cF_h, \mathbb{R}^d) 
\quad
\text{and}
\quad 
\mathcal{U}_h \coloneq \cP_{k+1}(\cT_h,\bbR^{d}) \times \cP_{k+1}(\cF^0_h, \mathbb{R}^d),
\]
where $\cP_{k+1}(\cF^0_h, \mathbb{R}^d)$ consists of polynomial functions vanishing on boundary faces:
\[
\cP_{k+1}(\cF^0_h, \mathbb{R}^d) \coloneq  \{\boldsymbol{\phi} \in \cP_{k+1}(\cF_h, \mathbb{R}^d) \mid \boldsymbol{\phi}|_F = \mathbf{0},\ \forall F\in \cF_h^\partial\}.
\]

We propose the following HDG space-discretization method for problem \eqref{eq:weakform}:  find    $\overrightarrow{\bu}_h = (\bu_{h}, \hat{\bu}_{h} ) \in \cC^1_{[0,T]}(\mathcal{U}_h)$, $\overrightarrow{\bp}_h = (\bp_{h}, \hat{\bp}_{h} ) \in \cC^1_{[0,T]}(\mathcal{Q}_h)$ and $(\bsig_{E,h}, \bsig_{V,h}, \psi_h) \in  \cC^1_{[0,T]}(\cH_{2,h})$    satisfying
\begin{align}\label{sd}
	\begin{split}
		\inner*{(\dot{\bu}_{h}, \dot{\bp}_{h} ), (\bv,\bq)}_{\cH_1} &+ 
		\inner{(\dot{\bsig}_{E,h}, \dot{\bsig}_{V,h}, \dot \psi_h), (\btau_E, \btau_V,\varphi) }_{\cH_2} + \inner{\cG \bsig_{V,h}, \omega\btau_V}_\Omega  + \inner{\beta \bp_h,\bq}_\Omega
      \\
		&+ B_h( (\overrightarrow{\bv}, \overrightarrow{\bq}), (\bsig_{E,h}, \bsig_{V,h},\psi_h) ) 
		- B_h ( ( \overrightarrow{\bu}_h, \overrightarrow{\bp}_h ), (\btau_E, \btau_V,\varphi) ) 
		\\
	  & +\dual{ \tfrac{(k+1)^2}{h_\cF}  \jump{\overrightarrow{\bu}_{h}}, 
	  \jump{\overrightarrow{\bv}} }_{\partial \cT_h}  
     +\dual{ \tfrac{(k+1)^2}{h_\cF}\jump{\overrightarrow{\bp}_{h}}, 
	 \jump{\overrightarrow{\bq}} }_{\partial \cT_h}
	  =  \inner{\bF, \bv}_\Omega + \inner{g, \varphi}_\Omega,
	\end{split}
\end{align}
for all $\overrightarrow{\bv} = (\bv, \hat{\bv} ) \in \mathcal{U}_{h}$, $\overrightarrow{\bq} = (\bq, \hat{\bq} ) \in \mathcal{Q}_{h}$ and $(\btau_E, \btau_V, \varphi) \in  \cH_{2,h}$,  where the bilinear form $B_h(\cdot,\cdot)$ is given by 
\begin{align*}
\begin{split}
	B_h ( (\overrightarrow{\bv}, \overrightarrow{\bq} ), (\btau_E, \btau_V,\varphi) ) &:= 
	\inner*{\btau_E + \omega\btau_V -  \alpha \varphi I_d, \beps(\bv) }_{\cT_h} -  
	\inner*{\varphi, \div\bq}_{\cT_h}  
	\\
	& \qquad  - \dual*{(\btau_E + \omega\btau_V -  \alpha \varphi I_d)\bn , \jump{\overrightarrow{\bv}} }_{\partial \mathcal{T}_h} + \dual*{\varphi \bn, \jump{\overrightarrow{\bq}} }_{\partial \mathcal{T}_h}.
\end{split}
\end{align*}
Here, $\bn \in \cP_0(\partial \cT_h, \bbR^d)$ denotes the piecewise constant unit normal vector field on element boundaries, where $\bn|_{\partial K} = \bn_K$ points outward from each element $K$.
We start up problem \eqref{sd} with the initial conditions
\begin{align}\label{initial-R1-R2-h*c}
	\begin{split}
		\bu_h(0) &= \Pi^{k+1}_\cT \bu^0, \quad \hat{\bu}_h(0)  = \Pi^{k+1}_\cF (\bu^0|_{\partial \mathcal{F}_h}), \quad \bp_h(0) = \Pi^{k+1}_\cT \bp^0, \quad \hat{\bp}_h(0)  = \Pi^{k+1}_\cF (\bp^0|_{\partial \mathcal{F}_h}), 
		\\
		\bsig_{E, h}(0) &= \Pi_\cT^k \bsig_E^0, \quad \bsig_{V, h}(0) = \Pi_\cT^k \bsig_V^0, \quad \text{and} \quad \psi_h(0) = \Pi_\cT^k \psi^0,
	\end{split}
\end{align}
where the projections $\Pi_\cT^k$ and $\Pi^{k+1}_\cF$ are defined in Appendix~\ref{appendix}. 

\section{Convergence analysis of the HDG method}\label{sec:convergence}

\subsection{Well-posedness and consistency}

The following result proves that the algebraic-differential system \eqref{sd} admits a unique solution.
\begin{proposition}
	Problem~\eqref{sd}-\eqref{initial-R1-R2-h*c} is well-posed.
\end{proposition}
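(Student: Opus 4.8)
The plan is to read \eqref{sd}--\eqref{initial-R1-R2-h*c} as a linear, finite-dimensional differential--algebraic system and to reduce it, by static condensation of the skeletal unknowns, to a system of ordinary differential equations with an invertible mass matrix, after which classical Cauchy--Lipschitz theory closes the argument. First I would note that time derivatives enter \eqref{sd} \emph{only} through the pivot products $\inner{\cdot,\cdot}_{\cH_1}$ and $\inner{\cdot,\cdot}_{\cH_2}$, which act exclusively on the volume unknowns $(\bu_h,\bp_h)\in\cH_{1,h}$ and $(\bsig_{E,h},\bsig_{V,h},\psi_h)\in\cH_{2,h}$. The hybrid unknowns $\hat\bu_h$ and $\hat\bp_h$ appear without any time derivative, solely through $B_h$ and the penalty terms, and therefore play a purely algebraic role.

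Next I would isolate and solve for the skeletal unknowns. Testing \eqref{sd} with $\overrightarrow{\bv}=(\mathbf 0,\hat\bv)$, $\overrightarrow{\bq}=(\mathbf 0,\hat\bq)$ and $(\btau_E,\btau_V,\varphi)=(\mathbf 0,\mathbf 0,0)$ annihilates every mass and volume contribution and leaves, for all $\hat\bv,\hat\bq\in\cP_{k+1}(\cF^0_h,\bbR^d)$,
\[
\dual*{\tfrac{(k+1)^2}{h_\cF}\hat\bu_h,\hat\bv}_{\partial\cT_h}
=\dual*{\tfrac{(k+1)^2}{h_\cF}\bu_h,\hat\bv}_{\partial\cT_h}
-\dual*{(\bsig_{E,h}+\omega\bsig_{V,h}-\alpha\psi_h I_d)\bn,\hat\bv}_{\partial\cT_h},
\]
\[
\dual*{\tfrac{(k+1)^2}{h_\cF}\hat\bp_h,\hat\bq}_{\partial\cT_h}
=\dual*{\tfrac{(k+1)^2}{h_\cF}\bp_h,\hat\bq}_{\partial\cT_h}
+\dual*{\psi_h\bn,\hat\bq}_{\partial\cT_h}.
\]
The trace--trace form $\dual{\tfrac{(k+1)^2}{h_\cF}\,\cdot\,,\,\cdot\,}_{\partial\cT_h}$ is a strictly positively weighted $L^2$-product over element boundaries, hence symmetric and positive definite on $\cP_{k+1}(\cF^0_h,\bbR^d)$: testing against itself gives $\sum_{K}\sum_{F\in\cF(K)}\tfrac{(k+1)^2}{h_F}\norm{\hat\bv}_{0,F}^2$, which vanishes only if $\hat\bv=\mathbf 0$ on every face. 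Each identity is therefore uniquely solvable, and $\hat\bu_h,\hat\bp_h$ are recovered as fixed, time-independent linear maps of the volume unknowns.

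Finally I would substitute these condensation relations into \eqref{sd} for arbitrary test functions, obtaining a closed system in the volume unknowns alone. Stacking them into a vector $\mathsf z(t)$, the system takes the form $\mathsf M\dot{\mathsf z}=\mathsf A\mathsf z+\mathsf b(t)$, where $\mathsf M$ is the Gram matrix of $\inner{\cdot,\cdot}_{\cH_1}\times\inner{\cdot,\cdot}_{\cH_2}$ on $\cH_{1,h}\times\cH_{2,h}$; by \eqref{bound:rho} and \eqref{normH} this matrix is symmetric positive definite, hence invertible. We thus arrive at a genuine linear ODE $\dot{\mathsf z}=\mathsf M^{-1}\mathsf A\,\mathsf z+\mathsf M^{-1}\mathsf b(t)$ with constant coefficient matrix and forcing $\mathsf b$ depending continuously on the data $\bF$ and $g$. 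With $\bF$ and $g$ continuous in time, the linear Cauchy--Lipschitz theorem delivers a unique $\mathsf z\in\cC^1_{[0,T]}$ determined by the volume initial data in \eqref{initial-R1-R2-h*c}; the skeletal unknowns then follow from the condensation relations, yielding a unique solution of \eqref{sd}--\eqref{initial-R1-R2-h*c}.

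The only step demanding genuine attention is the condensation, that is, the invertibility of the skeletal block; once its positive definiteness is in hand, the passage to an ODE and the application of Cauchy--Lipschitz are routine. This also reflects the index-one structure of the problem: the genuine Cauchy data is that of the differential (volume) unknowns, while $\hat\bu_h$ and $\hat\bp_h$ are algebraic and are reconstructed from the condensation relations, so the hybrid initializations in \eqref{initial-R1-R2-h*c} serve only to start the algebraic variables and impose no independent constraint on the evolution.
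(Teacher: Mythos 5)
Your argument is correct and follows essentially the same route as the paper: static condensation of the skeletal unknowns via the positive-definite trace--trace block, followed by reduction to a linear ODE system with an invertible mass matrix and classical existence/uniqueness; you simply make explicit the computations the paper delegates to \cite[Proposition 4]{meddahi2025}. The only (immaterial) slip is that the test trace $\hat\bq$ ranges over all of $\cP_{k+1}(\cF_h,\bbR^d)$ rather than $\cP_{k+1}(\cF^0_h,\bbR^d)$, since only $\hat\bv$ is constrained to vanish on boundary faces.
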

\begin{proof}
The system \eqref{sd} forms a set of differential-algebraic equations (DAEs). We establish its well-posedness using a standard HDG reduction technique. First, we express the numerical traces $\hat{\bu}_h$ and $\hat{\bp}_h$ in terms of the volume variables by testing \eqref{sd} with appropriately chosen test functions (specifically, setting volume test functions to zero and using arbitrary trace test functions $\hat{\bv} \in \cP_{k+1}(\cF^0_h, \mathbb{R}^d)$ and $\hat{\bq} \in \cP_{k+1}(\cF_h, \mathbb{R}^d)$). This yields an algebraic system that uniquely determines the traces. Substituting these trace expressions back into \eqref{sd} transforms the DAEs into a well-posed system of ordinary differential equations (ODEs) for the volume variables $\bu_h$, $\bp_h$, $\bsig_{E,h}$, $\bsig_{V,h}$, and $\psi_h$. We refer to \cite[Proposition 4]{meddahi2025} for a similar detailed construction.
\end{proof}	

Let us now establish the consistency of the HDG scheme \eqref{sd} with problem \eqref{eq:weakform}. The result requires additional regularity to guarantee that traces on the mesh skeleton are meaningful through the trace theorem.

\begin{proposition}\label{consistency}
Let $(\bu, \bp) \in \mathcal{C}^1_{[0,T]}(\mathcal{H}_1) \cap \mathcal{C}^0_{[0,T]}(\mathcal{X}_1)$ and $(\bsig_E, \bsig_V, \psi) \in \mathcal{C}^1_{[0,T]}(\mathcal{H}_2) \cap \mathcal{C}^0_{[0,T]}(\mathcal{X}_2)$ be the solution of \eqref{eq:IVP}. Assume that $\bsig_E + \omega\bsig_V - \alpha \psi I_d\in\cC_{[0,T]}^0(H^s(\cT_h, \bbS))$ and $(\bu,\bp)  \in \cC_{[0,T]}^0([H^s(\cT_h, \mathbb{R}^{d})]^2)$, with $s>\sfrac{1}{2}$. Then, it holds true that 
\begin{align}\label{consistent}
	\begin{split}
		&\inner*{(\dot{\bu}, \dot{\bp} ), (\bv_h,\bq_h)}_{\cH_1} + 
		\inner{(\dot{\bsig}_{E}, \dot{\bsig}_{V}, \dot \psi), (\btau_{E,h}, \btau_{V,h},\varphi_h) }_{\cH_2} + \inner{\cG\bsig_{V}, \omega\btau_{V,h}}_\Omega  + \inner{\beta \bp,\bq_h}_\Omega
      \\
		& \qquad + B_h( (\overrightarrow{\bv}_h, \overrightarrow{\bq}_h), (\bsig_{E}, \bsig_{V},\psi) ) 
		- B_h \bigg( \Big( (\bu, \bu|_{\cF_h}), (\bp, \bp|_{\cF_h}) \Big), (\btau_{E,h}, \btau_{V,h},\varphi_h) \bigg) 
		\\
	  & \qquad  +\dual{ \tfrac{(k+1)^2}{h_\cF}  (\bu- \bu|_{\cF_h}), 
	  \jump{\overrightarrow{\bv}_h} }_{\partial \cT_h}  
     +\dual{ \tfrac{(k+1)^2}{h_\cF} (\bp - \bp|_{\cF_h}), 
	 \jump{\overrightarrow{\bq}_h} }_{\partial \cT_h}
	 =  \inner{\bF, \bv_h}_\Omega + \inner{g, \varphi_h}_\Omega,
	\end{split}
\end{align}
for all $\overrightarrow{\bv}_h \in \mathcal{U}_h$, $\overrightarrow{\bq}_h \in \mathcal{Q}_h$, and $(\btau_{E,h}, \btau_{V,h} ,\varphi_h) \in  \cH_{2,h}$.
\end{proposition}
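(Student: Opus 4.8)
The plan is to derive \eqref{consistent} directly from the strong equations \eqref{eq:ptv-a}--\eqref{eq:ptv-e}, testing each against the volume component of the discrete test functions, integrating by parts element by element, and then matching the resulting interface contributions with the two copies of $B_h$ and the two penalty terms. Throughout, the regularity hypotheses ($s>\sfrac12$) serve only to make the traces on $\cF_h$ well defined through the trace theorem; the cancellations themselves rest on the conformity built into $\cX_1\times\cX_2$ and on the homogeneous boundary conditions \eqref{newBC}.

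First I would treat the block tested against $(\bv_h,\bq_h)$. Testing \eqref{eq:ptv-a} with $\bv_h$ and \eqref{eq:ptv-d} with $\bq_h$ over each $K\in\cT_h$ and applying the element-wise Green (integration-by-parts) identities produces the volume terms $\inner{\bsig_E+\omega\bsig_V-\alpha\psi I_d,\beps(\bv_h)}_{\cT_h}$ and $-\inner{\psi,\div\bq_h}_{\cT_h}$ together with the element-boundary terms $\dual{(\bsig_E+\omega\bsig_V-\alpha\psi I_d)\bn,\bv_h}_{\partial\cT_h}$ and $\dual{\psi\,\bn,\bq_h}_{\partial\cT_h}$. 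Adding and subtracting the single-valued test traces $\hat\bv,\hat\bq$, the part pairing the exact fluxes with $\hat\bv$ and $\hat\bq$ cancels across interior faces (opposite outward normals, combined with the single-valued normal trace of $\bsig_E+\omega\bsig_V-\alpha\psi I_d\in H_N(\bdiv,\Omega,\bbS)$ and the single-valued $\psi\in H_D^1(\Omega)$) and drops on $\Gamma$ (where $\hat\bv=\mathbf 0$ and $\psi=0$). This replaces $\bv_h,\bq_h$ by the jumps $\jump{\overrightarrow{\bv}_h},\jump{\overrightarrow{\bq}_h}$ in the boundary terms, so the four terms assemble exactly into $B_h((\overrightarrow{\bv}_h,\overrightarrow{\bq}_h),(\bsig_E,\bsig_V,\psi))$, together with $\inner{(\dot\bu,\dot\bp),(\bv_h,\bq_h)}_{\cH_1}+\inner{\beta\bp,\bq_h}_\Omega$ and the source $\inner{\bF,\bv_h}_\Omega$; this is the first block of \eqref{consistent}.

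Next I would test \eqref{eq:ptv-b}, \eqref{eq:ptv-c} and \eqref{eq:ptv-e} with $\btau_{E,h}$, $\omega\btau_{V,h}$ and $\varphi_h$. Using $\cA=\cC^{-1}$ and $\cG=(\cD-\cC)^{-1}$ turns the left-hand sides into $\inner{(\dot\bsig_E,\dot\bsig_V,\dot\psi),(\btau_{E,h},\btau_{V,h},\varphi_h)}_{\cH_2}+\inner{\cG\bsig_V,\omega\btau_{V,h}}_\Omega$, while the right-hand sides give $\inner{\beps(\bu),\btau_{E,h}+\omega\btau_{V,h}}_{\cT_h}$ and $\inner{g,\varphi_h}_\Omega-\inner{\div\bp+\alpha\div\bu,\varphi_h}_{\cT_h}$. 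Rewriting $\inner{\alpha\div\bu,\varphi_h}_{\cT_h}=\inner{\alpha\varphi_h I_d,\beps(\bu)}_{\cT_h}$, these volume contributions are recognized as $\inner{g,\varphi_h}_\Omega$ plus exactly $B_h$ evaluated at the exact state $((\bu,\bu|_{\cF_h}),(\bp,\bp|_{\cF_h}))$, provided its two boundary terms drop: $\dual{(\btau_{E,h}+\omega\btau_{V,h}-\alpha\varphi_h I_d)\bn,\bu-\bu|_{\cF_h}}_{\partial\cT_h}$ vanishes because $\bu\in H_D^1(\Omega,\bbR^d)$ is single valued, and $\dual{\varphi_h\bn,\bp-\bp|_{\cF_h}}_{\partial\cT_h}$ vanishes because only the single-valued normal trace $\bp\cdot\bn$ enters it (here $\bp\in H_N(\div,\Omega)$ suffices). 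This $+B_h$ is precisely what cancels the $-B_h(((\bu,\bu|_{\cF_h}),(\bp,\bp|_{\cF_h})),(\btau_{E,h},\btau_{V,h},\varphi_h))$ term of \eqref{consistent}, leaving the source $\inner{g,\varphi_h}_\Omega$; this is the second block.

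Adding the two blocks reproduces \eqref{consistent} up to the two penalty terms of \eqref{sd} evaluated on the exact state, namely $\dual{\tfrac{(k+1)^2}{h_\cF}(\bu-\bu|_{\cF_h}),\jump{\overrightarrow{\bv}_h}}_{\partial\cT_h}$ and its counterpart in $\bp$. I expect the bookkeeping of the interface and boundary terms to be the main obstacle, since one must invoke the \emph{correct} notion of single-valuedness at each step—full traces for $\bu$ and $\psi$, but only normal traces for the stress and for $\bp$—combined with \eqref{newBC}, so that every spurious face contribution either telescopes across interior faces or drops on $\Gamma$; one must also track the sign reversal relating $B_h$ to the continuous form $B$, which is what aligns the $+B_h/-B_h$ pattern of \eqref{sd} with the $-B/+B$ pattern of \eqref{eq:weakform}. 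The penalty terms are the one place where full single-valuedness of $\bp$ across $\cF_h$ is needed, since $H_N(\div,\Omega)$ alone controls only $\bp\cdot\bn$; their vanishing relies on the assumed elementwise regularity rendering the exact $\bu$ and $\bp$ continuous on the skeleton, whence $\bu-\bu|_{\cF_h}=\bp-\bp|_{\cF_h}=\mathbf 0$ and both terms drop, completing the identity.
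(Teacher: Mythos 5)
Your argument is correct and follows essentially the same route as the paper: elementwise integration by parts, cancellation of the skeleton terms via the single-valued (normal) traces of the exact fluxes together with the boundary conditions, and substitution of the governing equations, so that the two copies of $B_h$ reassemble and the penalty terms vanish because $\bu-\bu|_{\cF_h}=\bp-\bp|_{\cF_h}=\mathbf 0$. The only cosmetic difference is the direction of the computation (you go from the strong equations to the discrete identity, while the paper starts from $B_h$ and the weak form \eqref{eq:weakform}), and your explicit distinction between full and normal single-valuedness of the traces is, if anything, a more careful rendering of what the paper leaves implicit.
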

\begin{proof}
The continuity of the normal components of $\bsig -  \alpha p \mathrm{I}_d$ and $p\mathrm{I}_d$ across the interelements of $\cT_h$ gives 
\begin{align*}
	\begin{split}
			&B_h( (\overrightarrow{\bv}_h, \overrightarrow{\bq}_h), (\bsig_{E}, \bsig_{V},\psi) ) = 
			\inner*{\bsig_E + \omega\bsig_V - \alpha \psi I_d, \beps(\bv_{h})}_{\cT_h} 
			- \dual*{(\bsig_E + \omega\bsig_V  - \alpha \psi I_d)\bn, \jump{\overrightarrow{\bv}_h}  }_{\partial \mathcal{T}_h} 
			\\
			& - \inner*{\psi, \div\bq_h }_{\cT_h}  
			+ \dual*{\psi\bn, \jump{\overrightarrow{\bq}_h}  }_{\partial \mathcal{T}_h}  = \inner*{\bsig_E + \omega\bsig_V - \alpha \psi I_d, \nabla \bv_{h}}_{\cT_h} 
			- \dual*{(\bsig_E + \omega\bsig_V  - \alpha \psi I_d)\bn, \bv_h  }_{\partial \mathcal{T}_h} \\
			&\qquad - \inner*{\psi, \div\bq_h }_{\cT_h}  
			+ \dual*{\psi\bn, \bq_h  }_{\partial \mathcal{T}_h}.
	\end{split}
\end{align*}
Applying an elementwise integration by parts to the right-hand side of the previous identity, followed by the substitutions $ -\bdiv(\bsig_E + \omega\bsig_V  - \alpha \psi I_d)   = \bF   - \rho\dot{\bu }$ and $\nabla \psi    =  - \beta \bp - \chi \dot{\bp }$ yields 
\begin{align}\label{b2} 
	\begin{split}
		B_h( (\overrightarrow{\bv}_h, \overrightarrow{\bq}_h), (\bsig_{E}, \bsig_{V},\psi) ) &= - \inner{\bdiv(\bsig_E + \omega\bsig_V  - \alpha \psi I_d), \bv_{h}}_{\cT_h} + \inner{\nabla \psi, \bq_{h}}_{\cT_h} 
				\\&
		= -\inner{\rho\dot{\bu}, \bv_h }_{\Omega} -\inner{\chi\dot{\bp}, \bq_h }_{\Omega} - \inner{\beta \bp, \bq_{h}}_{\Omega} + \inner{\bF(t), \bv_{h}}_{\Omega}.	
	\end{split}
\end{align}
	
On the other hand, for all $(\btau_{E,h}, \btau_{V,h}, \varphi_h) \in \cH_{2,h}$ 
\begin{equation}\label{b1}
	B_h \bigg( \Big( (\bu, \bu|_{\cF_h}), (\bp, \bp|_{\cF_h}) \Big), (\btau_{E,h}, \btau_{V,h},\varphi_h) \bigg) = \inner*{\btau_{E,h} + \omega \btau_{V,h} - \alpha \varphi_h I_d, \beps(\bu) }_{\cT_h} -  \inner*{\varphi_h, \div\bp}_{\cT_h}.
\end{equation}
Substituting back \eqref{b2} and \eqref{b1} in the left-hand side of \eqref{consistent}  gives  
\begin{align}\label{consistent0}
	\begin{split}
		&\inner*{(\dot{\bu}, \dot{\bp} ), (\bv_h,\bq_h)}_{\cH_1} + 
		\inner{(\dot{\bsig}_{E}, \dot{\bsig}_{V}, \dot \psi), (\btau_{E,h}, \btau_{V,h},\varphi_h) }_{\cH_2} + \inner{\cG\bsig_{V}, \omega\btau_{V,h}}_\Omega  + \inner{\beta \bp,\bq_h}_\Omega
		\\
		& \qquad + B_h( (\overrightarrow{\bv}_h, \overrightarrow{\bq}_h), (\bsig_{E}, \bsig_{V},\psi) ) 
		- B_h \bigg( \Big( (\bu, \bu|_{\cF_h}), (\bp, \bp|_{\cF_h}) \Big), (\btau_{E,h}, \btau_{V,h},\varphi_h) \bigg) 
		\\
		& \qquad  +\dual{ \tfrac{(k+1)^2}{h_\cF}  (\bu - \bu|_{\cF_h}), 
		\jump{\overrightarrow{\bv}_h} }_{\partial \cT_h}  
		+\dual{ \tfrac{(k+1)^2}{h_\cF} (\bp - \bp|_{\cF_h}), 
		\jump{\overrightarrow{\bq}_h} }_{\partial \cT_h}
		\\
		&=  \inner{(\dot{\bsig}_{E}, \dot{\bsig}_{V}, \dot \psi), (\btau_{E,h}, \btau_{V,h},\varphi_h) }_{\cH_2} + \inner{\omega\cG\bsig_{V}, \btau_{V,h}}_\Omega -\inner*{\btau_{E,h} + \omega \btau_{V,h} - \alpha \varphi_h I_d, \beps(\bu) }_{\cT_h} 
		\\
		& \qquad + \inner*{\varphi_h, \div\bp}_{\cT_h}
	\end{split}
\end{align}
for all $(\btau_{E,h}, \btau_{V,h},\varphi_h)\in \mathcal H_{2,h}$ and $(\overrightarrow{\bv}_h, \overrightarrow{\bq}_h) \in \mathcal{U}_h\times \mathcal{Q}_h$. Moreover, applying Green's formula \eqref{green} in the second equation of \eqref{eq:weakform} and keeping in mind the density of the embedding $[H(\bdiv,\Omega,\bbS)]^2\times H^1(\Omega) \hookrightarrow \cH_{2}$ we obtain
\begin{equation*}
	\inner{(\dot{\bsig}_{E}, \dot{\bsig}_{V}, \dot \psi), (\btau_{E}, \btau_{V},\varphi) }_{\cH_2} = (g,\varphi)_\Omega  + \inner*{\btau_E + \omega\btau_V - \alpha \varphi I_d , \beps(\bu)}_\Omega - \inner{\omega\cG\bsig_{V}, \btau_{V}}_\Omega - \inner*{\varphi, \div \bp}_\Omega,
\end{equation*}
for all $(\btau_E,\btau_V, q) \in \cH_{2}$. Using  this identity in \eqref{consistent0} gives the result.
\end{proof}

\subsection{Convergence analysis}
The convergence analysis of the HDG method \eqref{sd} follows standard procedures. Using the stability of the HDG method and the consistency result \eqref{consistent}, we prove that the projected errors 
\begin{align*}
    \begin{alignedat}{2}
        \be_{u, h}(t)      &:= \Pi^{k+1}_\cT \bu  - \bu_{h}, &
		\quad 
		\be_{p,h}(t)       &:= \Pi^{k+1}_\cT \bp  - \bp_{h}, \\
        \hat\be_{ u, h}(t) &:= \Pi_\cF^{k+1}(\bu|_{\cF_h}) - \hat{\bu}_{h}, &\quad    
        \hat\be_{ p, h}(t) &:= \Pi_\cF^{k+1}(\bp|_{\cF_h}) - \hat{\bp}_{h}, \\
        \be_{\sigma_\bigstar,h}(t)    &:= \Pi_\cT^k\bsig_\bigstar - \bsig_{_\bigstar,h},\ \bigstar\in \set{E, V} &\quad 
        e_{\psi,h}(t)         &:= \Pi_\cT^k \psi - \psi_h, 
    \end{alignedat}
\end{align*}
can be estimated in terms of the approximation errors
\begin{align*}
    \begin{alignedat}{2}
        \bchi_{u}(t)       &:= \bu  - \Pi^{k+1}_\cT \bu, &\quad 
        \bchi_{p}(t)       &:= \bp  - \Pi^{k+1}_\cT \bp, \\
        \hat\bchi_{u}(t)  &:= \bu|_{\cF_h} - \Pi_\cF^{k+1}(\bu|_{\cF_h}), &\quad 
        \hat \bchi_{p}(t)  &:= \bp|_{\cF_h} - \Pi_\cF^{k+1}(\bp|_{\cF_h}), \\
        \bchi_{\sigma_\bigstar}(t)      &:= \bsig_\bigstar - \Pi_\cT^k\bsig_\bigstar, \ \bigstar\in \set{E, V}&\quad 
        \chi_\psi(t)           &:= \psi - \Pi_\cT^k \psi.
    \end{alignedat}
\end{align*}
In the subsequent analysis, to concisely represent both the volume errors and their corresponding trace errors, we introduce the following notation:
\begin{align*}
	\begin{alignedat}{2}
		\overrightarrow{\be}_{u,h}(t)       &:= (\be_{u,h}, \hat\be_{u,h}), &\quad 
		\overrightarrow{\be}_{p,h}(t)       &:= (\be_{p,h}, \hat\be_{p,h}), \\
		\overrightarrow{\bchi}_{u}(t)       &:= (\bchi_{u}, \hat\bchi_{u}), &\quad 
		\overrightarrow{\bchi}_{p}(t)       &:= (\bchi_{p}, \hat\bchi_{p}).
	\end{alignedat}
\end{align*}

We begin our analysis by the following boundedness property for the bilinear form $B_h(\cdot,\cdot)$.
\begin{proposition}
	There exists a constant $C>0$ independent of $h$ and $k$ such that
	\begin{equation}\label{Bhh}
		|B_h((\overrightarrow{\bv}, \overrightarrow{\bq} ),(\btau_{E,h},\btau_{V,h},\varphi_h)  )| \leq C  \norm{(\btau_{E,h},\btau_{V,h},\varphi_h) }_{\cH_2}  \abs{ (\overrightarrow{\bv}, \overrightarrow{\bq} )}_{\mathcal{U}\times \mathcal{Q}},
	\end{equation}
for all $(\btau_{E,h},\btau_{V,h},\varphi_h)  \in \mathcal H_{2,h}$ and  $(\overrightarrow{\bv}, \overrightarrow{\bq} ) \in \mathcal{U}\times \mathcal{Q}$.
\end{proposition}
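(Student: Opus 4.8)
The plan is to estimate the four terms of $B_h$ separately, absorbing the two volume integrals by Cauchy--Schwarz and the two skeleton integrals by an $hp$ discrete trace inequality. Throughout I would write $\btau := \btau_{E,h} + \omega\btau_{V,h} - \alpha\varphi_h I_d$, which is a piecewise polynomial of degree $k$ on $\cT_h$, and note that the triangle inequality together with the spectral equivalence \eqref{normH} gives both $\norm{\btau}_{0,\cT_h} \lesssim \norm{(\btau_{E,h},\btau_{V,h},\varphi_h)}_{\cH_2}$ and $\norm{\varphi_h}_{0,\cT_h} \lesssim \norm{(\btau_{E,h},\btau_{V,h},\varphi_h)}_{\cH_2}$. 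For the volume terms, Cauchy--Schwarz yields $|\inner*{\btau,\beps(\bv)}_{\cT_h}| \le \norm{\btau}_{0,\cT_h}\,\norm{\beps(\bv)}_{0,\cT_h}$ and $|\inner*{\varphi_h,\div\bq}_{\cT_h}| \le \norm{\varphi_h}_{0,\cT_h}\,\norm{\div\bq}_{0,\cT_h}$; since $\norm{\beps(\bv)}_{0,\cT_h} \le \abs{\overrightarrow{\bv}}_{\mathcal{U}}$ and $\norm{\div\bq}_{0,\cT_h} \le \abs{\overrightarrow{\bq}}_{\mathcal{Q}}$ by the definitions \eqref{norm_spaces}, both are already of the required form.

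The skeleton terms $\dual*{\btau\bn,\jump{\overrightarrow{\bv}}}_{\partial\cT_h}$ and $\dual*{\varphi_h\bn,\jump{\overrightarrow{\bq}}}_{\partial\cT_h}$ are the substantive part. For these I would insert the weight $\frac{k+1}{h_\cF^{\sfrac12}}$ that appears in the seminorms: on each face $F \in \cF(K)$ I pair $\frac{h_F^{\sfrac12}}{k+1}\norm{\btau\bn}_{0,F}$ against $\frac{k+1}{h_F^{\sfrac12}}\norm{\jump{\overrightarrow{\bv}}}_{0,F}$ and apply Cauchy--Schwarz over $\partial\cT_h$. The second factor assembles exactly into $\norm{\frac{k+1}{h_\cF^{\sfrac12}}\jump{\overrightarrow{\bv}}}_{0,\partial\cT_h} \le \abs{\overrightarrow{\bv}}_{\mathcal{U}}$. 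The first factor is controlled by the \emph{polynomial} trace inequality $\norm{w}_{0,\partial K} \lesssim \frac{k+1}{h_K^{\sfrac12}}\norm{w}_{0,K}$, valid for $w \in \cP_k(K)$, which applied to the components of $\btau$ gives $\frac{h_K^{\sfrac12}}{k+1}\norm{\btau\bn}_{0,\partial K} \lesssim \norm{\btau}_{0,K}$; the local quasi-uniformity \eqref{reguT} lets me replace $h_K$ by $h_F$, and summing over $K$ (each with a bounded number of faces) produces $\norm{\btau}_{0,\Omega} \lesssim \norm{(\btau_{E,h},\btau_{V,h},\varphi_h)}_{\cH_2}$. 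The flux term is handled identically with $\varphi_h\bn$ in place of $\btau\bn$ and $\jump{\overrightarrow{\bq}}$ in place of $\jump{\overrightarrow{\bv}}$, using $\norm{\varphi_h\bn}_{0,F} = \norm{\varphi_h}_{0,F}$.

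Adding the four bounds and using the elementary inequality $\abs{\overrightarrow{\bv}}_{\mathcal{U}} + \abs{\overrightarrow{\bq}}_{\mathcal{Q}} \lesssim \abs{(\overrightarrow{\bv},\overrightarrow{\bq})}_{\mathcal{U}\times\mathcal{Q}}$ then gives \eqref{Bhh}. The main obstacle---and the only place where genuine work happens---is the skeleton estimate, where it is essential that the trace inequality be applied to the polynomial argument $(\btau_{E,h},\btau_{V,h},\varphi_h) \in \cH_{2,h}$ and not to the merely broken-Sobolev test functions $(\overrightarrow{\bv},\overrightarrow{\bq})$, whose jumps are instead absorbed verbatim by the weighted seminorm. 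The $(k+1)$ factor produced by the $hp$ trace inequality is matched precisely by the $(k+1)$ weight built into $\abs{\cdot}_{\mathcal{U}}$ and $\abs{\cdot}_{\mathcal{Q}}$, which is exactly why the seminorms in \eqref{norm_spaces} carry that weight; keeping careful track of this $h$- and $k$-scaling is what makes the constant $C$ independent of both, as claimed.
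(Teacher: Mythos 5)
Your proof is correct and follows essentially the same route as the paper's: Cauchy--Schwarz on the volume terms, an $hp$-weighted Cauchy--Schwarz splitting on the skeleton terms, and then the discrete trace inequality \eqref{discTrace} applied to the polynomial arguments $(\btau_{E,h},\btau_{V,h},\varphi_h)$ together with the norm equivalence \eqref{normH}. The paper merely performs the splitting globally in one line rather than elementwise, and your observation that the trace inequality must act on the polynomial side while the jumps of $(\overrightarrow{\bv},\overrightarrow{\bq})$ are absorbed verbatim by the seminorms is exactly the point of the argument.
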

\begin{proof}
Applying the Cauchy-Schwarz inequality and \eqref{normH}, we deduce that
\begin{align}\label{Bh}
	|B_h((\overrightarrow{\bv}, \overrightarrow{\bq} ),(\btau_{E},\btau_{V},\varphi)  )| &\lesssim ( \norm{(\btau_E,\btau_V,\varphi) }^2_{\cH_2} + \norm{\tfrac{h_\cF^{\sfrac{1}{2}}}{k+1} \btau_E}^2_{0,\partial \cT_h} + \norm{\tfrac{h_\cF^{\sfrac{1}{2}}}{k+1} \omega\btau_V}^2_{0,\partial \cT_h} \nonumber 
	\\
	&\quad + \norm{\tfrac{h_\cF^{\sfrac{1}{2}}}{k+1} \varphi}^2_{0,\partial \cT_h})^{\sfrac{1}{2}} \abs{ (\overrightarrow{\bv}, \overrightarrow{\bq} )}_{\mathcal{U}\times \mathcal{Q}},
\end{align}
for all $(\btau_E,\btau_V, \varphi) \in \cH_2$ such that $\btau_E, \btau_V \in H^s(\cT_h, \mathbb{R}^{d\times d}_{\text{sym}})$ and $\varphi \in H^s(\cT_h)$ with $s\geq \sfrac{1}{2}$, and for all $(\overrightarrow{\bv}, \overrightarrow{\bq} ) \in \mathcal{U}\times \mathcal{Q}$. The result follows by applying the discrete trace inequality \eqref{discTrace}.
\end{proof}

We are in a position to establish the following key stability estimate. 
\begin{lemma}\label{stab_sd}
Under the conditions of Proposition~\ref{consistency},  there exists a constant $C>0$ independent of $h$ and $k$ such that 
	\begin{align}\label{stab}
		\begin{split}
			\max_{[0, T]} &\norm{(\be_{u,h}, \be_{p,h})}^2_{\cH_1} + \max_{[0, T]} \norm{(\be_{\sigma_E,h}, \be_{\sigma_V,h},  e_{\psi,h})}^2_{\cH_2} 
			\\& 
 		 \leq C \int_0^T \Big(\norm{\tfrac{h_\cF^{\sfrac{1}{2}}}{k+1} \bchi_{\sigma_E}}^2_{0,\partial \cT_h} + \norm{\tfrac{h_\cF^{\sfrac{1}{2}}}{k+1} \bchi_{\sigma_V}}^2_{0,\partial \cT_h} +  \norm{\tfrac{h_\cF^{\sfrac{1}{2}}}{k+1} \chi_\psi}^2_{0,\partial \cT_h}  +  \abs{ (\overrightarrow{\bchi}_{u},\overrightarrow{\bchi}_{p})}^2_{\mathcal{U} \times \hat{\mathcal{U}}} \Big) \, \text{d}t.
		\end{split}
	\end{align} 
\end{lemma}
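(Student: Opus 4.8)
The plan is to run an energy argument on the projected errors, letting the skew-symmetry of $B_h$ and the $L^2$-orthogonality of the projection operators do the essential work. First I would form the error equation by subtracting the discrete scheme \eqref{sd} from the consistency identity \eqref{consistent}, tested against the same functions. Since the two share the right-hand side $\inner{\bF, \bv}_\Omega + \inner{g, \varphi}_\Omega$, the difference is homogeneous. Writing the total error of each field as the sum of its approximation error and its projected error, e.g. $\bu - \bu_h = \bchi_u + \be_{u,h}$ and $\bu|_{\cF_h} - \hat\bu_h = \hat\bchi_u + \hat\be_{u,h}$, and analogously for $\bp, \bsig_E, \bsig_V$ and $\psi$, the resulting identity takes the schematic form $a_h(\text{projected errors}, \cdot) = -a_h(\text{approximation errors}, \cdot)$, where $a_h$ collects the full left-hand side of \eqref{sd}.

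Next I would test this identity with the projected errors themselves, i.e. take $(\overrightarrow{\bv}, \overrightarrow{\bq}) = (\overrightarrow{\be}_{u,h}, \overrightarrow{\be}_{p,h})$ and $(\btau_E, \btau_V, \varphi) = (\be_{\sigma_E,h}, \be_{\sigma_V,h}, e_{\psi,h})$. On the left the two copies of $B_h$ carry opposite signs and cancel exactly, which is the discrete counterpart of the monotonicity identity \eqref{eq:monotone0}. What remains is $\tfrac{1}{2}\tfrac{d}{dt}$ of $\norm{(\be_{u,h}, \be_{p,h})}^2_{\cH_1} + \norm{(\be_{\sigma_E,h}, \be_{\sigma_V,h}, e_{\psi,h})}^2_{\cH_2}$ together with the non-negative terms $\inner{\cG \be_{\sigma_V,h}, \omega\be_{\sigma_V,h}}_\Omega$, $\inner{\beta \be_{p,h}, \be_{p,h}}_\Omega$ and the two stabilization terms $\dual{\tfrac{(k+1)^2}{h_\cF}\jump{\overrightarrow{\be}_{u,h}}, \jump{\overrightarrow{\be}_{u,h}}}_{\partial\cT_h}$ and $\dual{\tfrac{(k+1)^2}{h_\cF}\jump{\overrightarrow{\be}_{p,h}}, \jump{\overrightarrow{\be}_{p,h}}}_{\partial\cT_h}$, all of which I keep on the left.

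On the right I would eliminate most contributions by orthogonality. Since $\Pi^{k+1}_\cT$ and $\Pi^k_\cT$ are $L^2$-orthogonal and the material coefficients are piecewise constant, every volume pairing of an approximation error against a discrete polynomial vanishes: this disposes of the time-derivative terms in $\cH_1$ and $\cH_2$, the reaction terms $\inner{\beta\bchi_p, \be_{p,h}}_\Omega$ and $\inner{\cG\bchi_{\sigma_V}, \omega\be_{\sigma_V,h}}_\Omega$, and crucially the volume part of $B_h((\overrightarrow{\be}_{u,h}, \overrightarrow{\be}_{p,h}), (\bchi_{\sigma_E}, \bchi_{\sigma_V}, \chi_\psi))$, where $\beps(\be_{u,h})$, $\div\be_{u,h}$ and $\div\be_{p,h}$ are all piecewise polynomials of degree at most $k$ while $\bchi_{\sigma_E}, \bchi_{\sigma_V}, \chi_\psi$ are $L^2$-orthogonal to that space. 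The surviving terms are purely skeletal: the boundary residuals $\dual{(\bchi_{\sigma_E} + \omega\bchi_{\sigma_V} - \alpha\chi_\psi I_d)\bn, \jump{\overrightarrow{\be}_{u,h}}}_{\partial\cT_h}$ and $\dual{\chi_\psi\bn, \jump{\overrightarrow{\be}_{p,h}}}_{\partial\cT_h}$, the form $B_h((\overrightarrow{\bchi}_u, \overrightarrow{\bchi}_p), (\be_{\sigma_E,h}, \be_{\sigma_V,h}, e_{\psi,h}))$, and the stabilization cross-terms $\dual{\tfrac{(k+1)^2}{h_\cF}\jump{\overrightarrow{\bchi}_u}, \jump{\overrightarrow{\be}_{u,h}}}_{\partial\cT_h}$ and $\dual{\tfrac{(k+1)^2}{h_\cF}\jump{\overrightarrow{\bchi}_p}, \jump{\overrightarrow{\be}_{p,h}}}_{\partial\cT_h}$.

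Finally I would bound these terms. The form $B_h((\overrightarrow{\bchi}_u, \overrightarrow{\bchi}_p), \cdot)$ is controlled by \eqref{Bhh}, producing $C\norm{(\be_{\sigma_E,h}, \be_{\sigma_V,h}, e_{\psi,h})}_{\cH_2}\,\abs{(\overrightarrow{\bchi}_u, \overrightarrow{\bchi}_p)}_{\mathcal{U}\times\mathcal{Q}}$; the skeletal residuals and cross-terms are split by Cauchy--Schwarz into the weighted norms $\norm{\tfrac{h_\cF^{\sfrac{1}{2}}}{k+1}\bchi_{\sigma_E}}_{0,\partial\cT_h}$, $\norm{\tfrac{h_\cF^{\sfrac{1}{2}}}{k+1}\bchi_{\sigma_V}}_{0,\partial\cT_h}$, $\norm{\tfrac{h_\cF^{\sfrac{1}{2}}}{k+1}\chi_\psi}_{0,\partial\cT_h}$ and the seminorm pieces $\norm{\tfrac{k+1}{h_\cF^{\sfrac{1}{2}}}\jump{\overrightarrow{\bchi}_u}}_{0,\partial\cT_h}$, $\norm{\tfrac{k+1}{h_\cF^{\sfrac{1}{2}}}\jump{\overrightarrow{\bchi}_p}}_{0,\partial\cT_h}$, each multiplied by a factor whose square is exactly one of the retained stabilization terms. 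A Young inequality then absorbs every factor of $\overrightarrow{\be}_{u,h}, \overrightarrow{\be}_{p,h}$ into the left-hand stabilization, leaving only the energy term $\norm{(\be_{\sigma_E,h}, \be_{\sigma_V,h}, e_{\psi,h})}^2_{\cH_2}$ on the right. Integrating in time, using that the projection-based data \eqref{initial-R1-R2-h*c} force $\overrightarrow{\be}_{u,h}(0) = \overrightarrow{\be}_{p,h}(0) = 0$ and $(\be_{\sigma_E,h}, \be_{\sigma_V,h}, e_{\psi,h})(0) = 0$, and applying Grönwall's inequality yields \eqref{stab}, with $C$ depending on $T$ but not on $h$ or $k$. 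The main obstacle is precisely this orthogonality bookkeeping of the $B_h$ volume terms: one must verify that $\beps(\be_{u,h})$ and $\div\be_{p,h}$ genuinely have degree at most $k$, so that the degree-$k$ stress and pressure projection errors annihilate them and only the correctly weighted skeletal residuals persist on the right-hand side.
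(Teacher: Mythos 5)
Your proposal follows essentially the same route as the paper's proof: the error equation obtained from consistency, the split into projected and approximation errors, testing with the projected errors so the two $B_h$ terms cancel, the $L^2$-orthogonality argument (via $\beps(\cP_{k+1})\subset\cP_k$ and $\div(\cP_{k+1})\subset\cP_k$) that reduces the right-hand side to skeletal terms plus $B_h((\overrightarrow{\bchi}_u,\overrightarrow{\bchi}_p),\cdot)$ bounded by \eqref{Bhh}, and the Cauchy--Schwarz/Young absorption with vanishing projected initial errors. The only cosmetic difference is that you invoke Gr\"onwall at the end, whereas the paper absorbs the $\int_0^T\norm{(\be_{\sigma_E,h},\be_{\sigma_V,h},e_{\psi,h})}^2_{\cH_2}\,\text{d}t$ term directly into the left-hand maximum via Young's inequality.
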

\begin{proof}
By virtue of the orthogonality properties
\begin{align*}
    \inner*{(\dot{\bchi}_{u}, \dot{\bchi}_{p}), (\bv, \bq)}_{\cH_1} + \inner{\beta \bchi_{p},\bq}_\Omega &=0 \quad \forall (\bv, \bq) \in \cH_{1,h}, 
    \\
    \inner{(\dot{\bchi}_{\sigma_E}, \dot{\bchi}_{\sigma_V}, \dot{\bchi}_{\psi}), (\btau_E, \btau_V,\varphi) }_{\cH_2} + \inner{\cG\bchi_{\sigma_V}, \omega\btau_V}_\Omega   &=0 \quad \forall (\btau_E, \btau_V,\varphi)\in \cH_{2,h},
\end{align*}
and the consistency result \eqref{consistent}, we can establish that for all $(\btau_E, \btau_V,\varphi) \in \mathcal H_{2,h}$ and $(\overrightarrow{\bv}, \overrightarrow{\bq} )\in \mathcal{U}_h \times \mathcal{Q}_h$:
\begin{align}\label{orthog} 
    \begin{split}
        & \inner*{(\dot{\be}_{u,h}, \dot{\be}_{p,h}), (\bv, \bq)}_{\cH_1} + 
        \inner{(\dot{\be}_{\sigma_E,h}, \dot{\be}_{\sigma_V,h}, \dot{e}_{\psi,h}), (\btau_E, \btau_V,\varphi) }_{\cH_2}  + \inner{\beta \be_{p,h},\bq}_\Omega + 
        \inner{\cG\be_{\sigma_V,h}, \omega\btau_V}_\Omega
        \\
        & \qquad + B_h((\overrightarrow{\bv}, \overrightarrow{\bq} ), (\be_{\sigma_E,h}, \be_{\sigma_V,h}, e_{\psi,h}) ) 
        - B_h((\overrightarrow{\be}_{u,h}, \overrightarrow{\be}_{p,h} ), (\btau_E, \btau_V, \varphi) )
        \\ 
        & \qquad + \dual{ \tfrac{(k+1)^2}{h_\cF}\jump{\overrightarrow{\be}_{u,h}}, \jump{\overrightarrow{\bv}}}_{\partial \cT_h}
        +\dual{ \tfrac{(k+1)^2}{h_\cF}\jump{\overrightarrow{\be}_{p,h}}, \jump{\overrightarrow{\bq}}}_{\partial \cT_h}
        \\ &
        = \dual{(\bchi_{\sigma_E} + \omega \bchi_{\sigma_V} - \alpha \chi_\psi I_d)\bn, \jump{\overrightarrow{\bv}}}_{\partial \cT_h} - \dual{\chi_\psi\bn, \jump{\overrightarrow{\bq}}}_{\partial \cT_h}
        \\ &
        \qquad + B_h((\overrightarrow{\bchi}_{u}, \overrightarrow{\bchi}_{p} ), (\btau_E, \btau_V, \varphi) ) 
        -\dual{ \tfrac{(k+1)^2}{h_\cF}\jump{\overrightarrow{\bchi}_{u}}, \jump{\overrightarrow{\bv}}}_{\partial \cT_h}
        - \dual{ \tfrac{(k+1)^2}{h_\cF}\jump{\overrightarrow{\bchi}_{p}}, \jump{\overrightarrow{\bq}}}_{\partial \cT_h}.
    \end{split} 
\end{align}

This identity follows from the fact that for all $(\underline{\bv} , \hat{\underline{\bv}} )\in \cH_{1,h} \times \hat{\cH}_{1,h}$:
\[
    B_h((\overrightarrow{\bv}, \overrightarrow{\bq} ), (\bchi_{\sigma_E}, \bchi_{\sigma_V}, \chi_{\psi}) ) = -\dual{(\bchi_{\sigma_E} + \omega \bchi_{\sigma_V} - \alpha \chi_\psi I_d)\bn, \jump{\overrightarrow{\bv}}}_{\partial \cT_h} + \dual{\chi_\psi\bn, \jump{\overrightarrow{\bq}}}_{\partial \cT_h},
\]
which is supported by the polynomial space inclusions:
\[
    \beps(\cP_{k+1}(\cT_h, \bbR^d)) \subset \cP_k(\cT_h, \bbS) \quad  \text{and} \quad  \div(\cP_{k+1}(\cT_h, \bbR^d)) \subset \cP_k(\cT_h).
\]
Choosing  $\btau_E = \be_{\sigma_E,h}$, $\btau_E = \be_{\sigma_V,h}$, $\varphi = e_{\psi,h}$, and $(\overrightarrow{\bv} ,\overrightarrow{\bq} ) = (\overrightarrow{\be}_{u,h}, \overrightarrow{\be}_{p,h} )$ in \eqref{orthog} and applying the Cauchy-Schwarz inequality together with \eqref{Bhh} yield 
\begin{align*}
	\frac12 \frac{\text{d}}{\text{d}t} &\big\{\norm{ (\be_{u,h}, \be_{p,h})}^2_{\cH_1} 
	+ \norm{(\be_{\sigma_E,h}, \be_{\sigma_V,h}, e_{\psi,h})}^2_{\cH_2}  \big\} 
	+ \norm{ \beta^{\sfrac12}  \be_{p,h}}^2_{0,\Omega} + \inner{ \omega \cG  \be_{\bsig_V,h}, \be_{\bsig_V,h}}_{0,\Omega} 
	\\
	& \qquad + \norm{\tfrac{k+1}{h_\cF^{\sfrac{1}{2}}}\jump{\overrightarrow{\be}_{u,h}}}^2_{0,\partial \cT_h} + \norm{\tfrac{k+1}{h_\cF^{\sfrac{1}{2}}}\jump{\overrightarrow{\be}_{p,h}}}^2_{0,\partial \cT_h}  
	 \\
	&\leq \norm{\tfrac{h_\cF^{\sfrac{1}{2}}}{k+1} (\bchi_{\sigma_E} + \omega \bchi_{\sigma_V} - \alpha \chi_\psi I_d)\bn}_{0,\partial \cT_h} 
	\norm{\tfrac{k+1}{h_\cF^{\sfrac{1}{2}}} \jump{\overrightarrow{\be}_{u,h}}}_{0,\partial \cT_h} + \norm{\tfrac{h_\cF^{\sfrac{1}{2}}}{k+1} \chi_\psi}_{0,\partial \cT_h} 
	\norm{\tfrac{k+1}{h_\cF^{\sfrac{1}{2}}} \jump{\overrightarrow{\be}_{p,h}}}_{0,\partial \cT_h} 
	\\
	&\quad + C \norm{(\be_{\sigma_E,h}, \be_{\sigma_V,h}, e_{\psi,h})}_{\cH_2}  
	\abs{ (\overrightarrow{\bchi}_{u}, \overrightarrow{\bchi}_{p} )}_{\mathcal{U} \times \mathcal{Q}} 
	\\
	&\quad + \norm{\tfrac{k+1}{h_\cF^{\sfrac{1}{2}}} \jump{\overrightarrow{\bchi}_{u}}  }_{0,\partial \cT_h} 
	\norm{\tfrac{k+1}{h_\cF^{\sfrac{1}{2}}} \jump{\overrightarrow{\be}_{u,h}}  }_{0,\partial \cT_h} + \norm{\tfrac{k+1}{h_\cF^{\sfrac{1}{2}}} \jump{\overrightarrow{\bchi}_{p}}  }_{0,\partial \cT_h} 
	\norm{\tfrac{k+1}{h_\cF^{\sfrac{1}{2}}} \jump{\overrightarrow{\be}_{p,h}}  }_{0,\partial \cT_h}. 
\end{align*}
We notice that, because of assumption \eqref{initial-R1-R2-h*c}, the projected errors  satisfy vanishing initial conditions, namely, $\be_{\sigma_E,h}(0) = \be_{\sigma_V,h}(0)= \mathbf 0$, $e_{\psi,h}(0) = 0$  and $\overrightarrow{\be}_{u,h} (0) = \overrightarrow{\be}_{p,h}(0) = (\mathbf 0, \mathbf 0)$. Hence, integrating over $t\in (0, T]$ and using again the Cauchy-Schwarz inequality we deduce that 
\begin{align*}
 		&\norm{ (\be_{u,h}, \be_{p,h})}^2_{\cH_1} 
		 + \norm{(\be_{\sigma_E,h}, \be_{\sigma_V,h}, \be_{\psi,h})}^2_{\cH_2} +  \int_0^t \norm{\tfrac{k+1}{h_\cF^{\sfrac{1}{2}}}\jump{\overrightarrow{\be}_{u,h}}}^2_{0,\partial \cT_h}\,\text{d}s + \int_0^t \norm{\tfrac{k+1}{h_\cF^{\sfrac{1}{2}}}\jump{\overrightarrow{\be}_{p,h}}}^2_{0,\partial \cT_h}\,\text{d}s
 		\\
 		&\lesssim 
		\Big(\int_0^T (\norm{\tfrac{h_\cF^{\sfrac{1}{2}}}{k+1} (\bchi_{\sigma_E} + \omega \bchi_{\sigma_V} - \alpha \chi_\psi I_d)\bn}^2_{0,\partial \cT_h}  +  \norm{\tfrac{h_\cF^{\sfrac{1}{2}}}{k+1} \chi_\psi }^2_{0,\partial \cT_h} + \norm{\tfrac{k+1}{h_\cF^{\sfrac{1}{2}}} \jump{\overrightarrow{\bchi}_{u}}  }^2_{0,\partial \cT_h}
		\\
		&\qquad \qquad  + \norm{\tfrac{k+1}{h_\cF^{\sfrac{1}{2}}} \jump{\overrightarrow{\bchi}_{p}}  }^2_{0,\partial \cT_h})  \text{d}t\Big)^{\sfrac{1}{2}}  
		\times 
 		\Big( \int_0^T( \norm{\tfrac{k+1}{h_\cF^{\sfrac{1}{2}}} \jump{\overrightarrow{\be}_{u,h}}  }^2_{0,\partial \cT_h} + \norm{\tfrac{k+1}{h_\cF^{\sfrac{1}{2}}} \jump{\overrightarrow{\be}_{p,h}}  }^2_{0,\partial \cT_h})\ \text{d}t\Big)^{\sfrac{1}{2}}
 		\\
 		&\qquad \qquad + \Big(\int_{0}^{T}\norm{(\be_{\sigma_E,h}, \be_{\sigma_V,h}, e_{\psi,h})}^2_{\cH_2} \text{d}t\Big)^{\sfrac12} \Big( \int_0^T \abs{ (\overrightarrow{\bchi}_{u}, \overrightarrow{\bchi}_{p} )}^2_{\mathcal{U} \times \mathcal{Q}}  \text{d}t \Big)^{\sfrac12},\quad \forall t\in (0, T].
\end{align*}
Finally, a simple application of Young's inequality yields 
\begin{align*}
 		 &\max_{[0, T]}\norm{ (\be_{u,h}, \be_{p,h})}^2_{\cH_1} + \max_{[0, T]} \norm{(\be_{\sigma_E,h}, \be_{\sigma_V,h}, e_{\psi,h})}^2_{\cH_2} +  \int_0^T (\norm{\tfrac{k+1}{h_\cF^{\sfrac{1}{2}}}\jump{\overrightarrow{\be}_{u,h}}}^2_{0,\partial \cT_h} + \norm{\tfrac{k+1}{h_\cF^{\sfrac{1}{2}}}\jump{\overrightarrow{\be}_{p,h}}}^2_{0,\partial \cT_h})\text{d}t
 		\\
 		&\qquad \qquad \lesssim \int_0^T \Big(\norm{\tfrac{h_\cF^{\sfrac{1}{2}}}{k+1} (\bchi_{\sigma_E} + \omega \bchi_{\sigma_V} - \alpha \chi_\psi I_d)}^2_{0,\partial \cT_h}  +  \norm{\tfrac{h_\cF^{\sfrac{1}{2}}}{k+1} \chi_\psi }^2_{0,\partial \cT_h}  +  \abs{ (\overrightarrow{\bchi}_{u}, \overrightarrow{\bchi}_{p} )}^2_{\mathcal{U} \times \mathcal{Q}} \Big) \, \text{d}t,
\end{align*}
and the result follows.
\end{proof}

As a direct consequence of the stability estimate \eqref{stab}, we obtain the following convergence result for the HDG method \eqref{sd}-\eqref{initial-R1-R2-h*c}.
\begin{theorem}\label{hpConv}
	Let $(\bu, \bp) \in \mathcal{C}^1_{[0,T]}(\mathcal{H}_1) \cap \mathcal{C}^0_{[0,T]}(\mathcal{X}_1)$ and $(\bsig_E, \bsig_V, \psi) \in \mathcal{C}^1_{[0,T]}(\mathcal{H}_2) \cap \mathcal{C}^0_{[0,T]}(\mathcal{X}_2)$ be the solution of \eqref{eq:IVP}. Assume that $\bsig_E, \bsig_V \in \cC^0_{[0,T]}(H^{1+r}(\Omega, \mathbb{R}^{d\times d}_{\text{sym}}))$, $\psi \in \cC^0_{[0,T]}(H^{1+r}(\Omega))$ and $\bu, \bp  \in \cC^0_{[0,T]}(H^{2+r}( \Omega, \bbR^{d}))$, with $r\geq 0$. Then, there exists a constant $C>0$ independent of $h$ and $k$ such that 
\begin{align*}
		 &\max_{[0, T]}\norm{ (\bu-\bu_h, \bp-\bp_h)(t))}_{\cH_1} + \max_{[0, T]} \norm{(\bsig_E - \bsig_{E,h}, \bsig_V - \bsig_{V,h}, \psi - \psi_h)(t)}_{\cH_2} 
		\\
		&\, \leq  C \tfrac{h_K^{\min\{ r, k \}+1}}{(k+1)^{r+\sfrac12}} \Big( \max_{[0,T]}\norm{\bu }_{2+r,\Omega} + \max_{[0,T]}\norm{\bp }_{2+r,\Omega} + \max_{[0, T]}\norm*{\bsig_E}_{1+r, \Omega} + \max_{[0, T]}\norm*{\bsig_V}_{1+r, \Omega} + \max_{[0, T]}\norm*{\psi}_{1+r, \Omega}   \Big). 
\end{align*}
\end{theorem}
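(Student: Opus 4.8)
The plan is to obtain the result directly from the stability bound \eqref{stab} of Lemma~\ref{stab_sd} combined with the $hp$-approximation estimates collected in Appendix~\ref{appendix}. First I would split each error into its projection and projected-error components, writing
\[
\bu - \bu_h = \bchi_u + \be_{u,h}, \quad \bp - \bp_h = \bchi_p + \be_{p,h},
\]
and analogously $\bsig_\bigstar - \bsig_{\bigstar,h} = \bchi_{\sigma_\bigstar} + \be_{\sigma_\bigstar,h}$ for $\bigstar\in\set{E,V}$ and $\psi - \psi_h = \chi_\psi + e_{\psi,h}$. The triangle inequality then bounds $\norm{(\bu-\bu_h,\bp-\bp_h)}_{\cH_1}$ by $\norm{(\bchi_u,\bchi_p)}_{\cH_1} + \norm{(\be_{u,h},\be_{p,h})}_{\cH_1}$, and similarly in the $\cH_2$-norm. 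The projected-error contributions $\max_{[0,T]}\norm{(\be_{u,h},\be_{p,h})}_{\cH_1}$ and $\max_{[0,T]}\norm{(\be_{\sigma_E,h},\be_{\sigma_V,h},e_{\psi,h})}_{\cH_2}$ are already controlled by the right-hand side of \eqref{stab}, so the whole argument reduces to estimating projection errors — both those surviving in the triangle inequality and those appearing inside \eqref{stab}.

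Next I would invoke the Appendix. Every remaining quantity is a projection error of a fixed field, so I would apply the elementwise $hp$-estimates of Lemma~\ref{maintool2} (for the volume terms and for the broken seminorm $\abs{\cdot}_{\mathcal{U}\times\mathcal{Q}}$ of \eqref{norm_spaces}) together with the discrete trace inequality \eqref{discTrace} (for the weighted boundary norms $\norm{\tfrac{h_\cF^{\sfrac12}}{k+1}\,\cdot\,}_{0,\partial\cT_h}$). Using the regularity $\bsig_E,\bsig_V\in H^{1+r}$ and $\psi\in H^{1+r}$ with the degree-$k$ projection $\Pi^k_\cT$, each boundary term $\norm{\tfrac{h_\cF^{\sfrac12}}{k+1}\bchi_{\sigma_\bigstar}}_{0,\partial\cT_h}$ and $\norm{\tfrac{h_\cF^{\sfrac12}}{k+1}\chi_\psi}_{0,\partial\cT_h}$ is bounded by $\tfrac{h^{\min\{r,k\}+1}}{(k+1)^{r+\sfrac12}}$ times the corresponding $H^{1+r}$-norm. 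Using instead $\bu,\bp\in H^{2+r}$ with the degree-$(k+1)$ projection $\Pi^{k+1}_\cT$, the seminorm $\abs{(\overrightarrow{\bchi}_u,\overrightarrow{\bchi}_p)}_{\mathcal{U}\times\mathcal{Q}}$ — whose contributions are the strain/divergence term $\norm{\beps(\bchi_u)}_{0,\cT_h}$ and the weighted jumps $\norm{\tfrac{k+1}{h_\cF^{\sfrac12}}\jump{\overrightarrow{\bchi}_u}}_{0,\partial\cT_h}$ — is bounded by the same rate times the $H^{2+r}$-norm. Since the right-hand side of \eqref{stab} is a time integral over $[0,T]$, bounding the integrand by its maximum (the factor $T$ being absorbed into $C$) converts these into the $\max_{[0,T]}$ Sobolev norms of the statement. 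The projection errors $\norm{(\bchi_u,\bchi_p)}_{\cH_1}$ and $\norm{(\bchi_{\sigma_E},\bchi_{\sigma_V},\chi_\psi)}_{\cH_2}$ retained in the triangle inequality are dominated by the same rate (the $\cH_1$-part being in fact of strictly higher order in $h$). Collecting the worst contribution yields the stated estimate.

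The technical heart of the argument is the $hp$-bookkeeping that forces all these terms to the single common rate $\tfrac{h^{\min\{r,k\}+1}}{(k+1)^{r+\sfrac12}}$: two distinct polynomial degrees ($k$ for the stresses and $\psi$, $k+1$ for the velocities) and two distinct regularity exponents ($1+r$ and $2+r$) must be reconciled, and one must track carefully how the mesh-weighted factors $\tfrac{h_\cF^{\sfrac12}}{k+1}$ and $\tfrac{k+1}{h_\cF^{\sfrac12}}$ interact with the trace estimates. The decisive point — the place where the exponent $r+\sfrac12$ (rather than a higher power of $k+1$) is fixed — is the weighted jump term $\norm{\tfrac{k+1}{h_\cF^{\sfrac12}}\jump{\overrightarrow{\bchi}_u}}_{0,\partial\cT_h}$ of the velocity seminorm: its factor $\tfrac{k+1}{h_\cF^{\sfrac12}}$ consumes one half-power of $h$ and one power of $k+1$ relative to the interior rate, making it the slowest-converging contribution and thereby setting the overall order. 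All remaining terms are then shown to be dominated by it, which is the only genuinely delicate step.
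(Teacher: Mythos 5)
Your proposal follows essentially the same route as the paper's proof: triangle inequality to split the total error into projection and projected-error parts, Lemma~\ref{stab_sd} to control the projected errors, and the Appendix $hp$-estimates to control all remaining projection errors, with the time integral absorbed into the constant. One correction: the weighted boundary terms $\norm{\tfrac{h_\cF^{\sfrac{1}{2}}}{k+1}\bchi_{\sigma_\bigstar}}_{0,\partial\cT_h}$ and $\norm{\tfrac{h_\cF^{\sfrac{1}{2}}}{k+1}\chi_\psi}_{0,\partial\cT_h}$ cannot be handled by the discrete trace inequality \eqref{discTrace}, since that is an inverse estimate valid only for piecewise polynomials $\xi\in\cP_k(\cT_h)$, whereas $\chi_\psi=\psi-\Pi^k_\cT\psi$ is not polynomial; the correct tool is the boundary part of \eqref{tool1} in Lemma~\ref{lem:maintool}, which bounds these terms directly by $C\,h^{\min\{r,k\}+1}(k+1)^{-(r+1)}\norm{\xi}_{1+r,\Omega}$ --- in fact half a power of $k+1$ better than you claimed, so the final rate is unaffected. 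Apart from this misattribution your bookkeeping, including identifying the velocity jump contribution in \eqref{tool2} as the source of the $(k+1)^{-(r+\sfrac{1}{2})}$ exponent (consistent with Remark~\ref{R1}), matches the paper's argument.
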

\begin{proof}
	By applying the triangle inequality and the stability estimate \eqref{stab}, we obtain: 
	\begin{align*}
		&\max_{[0, T]}\norm{ (\bu-\bu_h, \bp-\bp_h)(t))}_{\cH_1} + \max_{[0, T]} \norm{(\bsig_E - \bsig_{E,h}, \bsig_V - \bsig_{V,h}, \psi - \psi_h)(t)}_{\cH_2} 
		\\
		&\lesssim \max_{[0, T]}\norm{(\bchi_u, \bchi_p)(t)}^2_{\cH_1} 
		+ \max_{[0, T]}\norm{(\bchi_{\sigma_E}, \bchi_{\sigma_V}, \chi_\psi)(t)}^2_{\cH_2} 
		\\
		&\quad + \left( \int_0^T \Big(\norm{\tfrac{h_\cF^{\sfrac{1}{2}}}{k+1} \bchi_{\sigma_E}}^2_{0,\partial \cT_h} + \norm{\tfrac{h_\cF^{\sfrac{1}{2}}}{k+1} \bchi_{\sigma_V} }^2_{0,\partial \cT_h}  +  \norm{\tfrac{h_\cF^{\sfrac{1}{2}}}{k+1} \chi_\psi }^2_{0,\partial \cT_h}  +  \abs{ (\overrightarrow{\bchi}_{u}, \overrightarrow{\bchi}_{p} )}^2_{\mathcal{U} \times \mathcal{Q}} \Big) \, \text{d}t \right)^{\sfrac{1}{2}}.
	\end{align*}
	The desired result follows directly from the error estimates \eqref{tool1} and \eqref{tool2}.
\end{proof}
	
\begin{remark}\label{R1}
	The convergence analysis in Theorem~\ref{hpConv} leads to two main observations:
	\begin{itemize}
	\item The theoretical bound is sub-optimal by a factor \((k+1)^{1/2}\).  This slight deterioration arises from the combination of the discrete trace-inverse inequality \eqref{discTrace} with the standard stabilization weight \((k+1)^2/h_{\cF}\) introduced to ensure stability; together they lead to estimate \eqref{tool2}.
	\item The numerical experiments of Subsection~\ref{example1} indicate optimal convergence rates for $\bu$ and $\bp$ in the $L^2$-norms, suggesting a gap between theoretical predictions and practical performance that will be addressed in future work.
	\end{itemize}
\end{remark}

\section{Numerical results}\label{sec:numresults}

The numerical results presented in this section have been implemented using the finite element library \texttt{Netgen/NGSolve} \cite{schoberl2014c++}. Firstly, we confirm the accuracy of our HDG scheme by analyzing a problem with a manufactured solution. Then, we consider a practical model problem inspired by \cite{antoniettiIMA, morency}.

\subsection{Validation of the convergence rates}\label{example1}

In this example, we confirm the decay of error as predicted by Theorem~\ref{hpConv} with respect to the parameters $h$ and $k$.  We employ successive levels of refinement on an unstructured mesh and compare the computed solutions to an exact solution of problem \eqref{eq:poro_thermo_visco} given by 
\begin{align}\label{exactSol}
\begin{split}
\psi(x,y,t) &:= sin(\pi x) \sin(\pi y) \cos(2\pi t) \quad \text{in $\Omega\times (0, T]$},
\\
\bu(x,y,t) &:= \begin{pmatrix}
	2\pi y  \sin(\pi x) \cos(\pi y)  \cos(2\pi t)
	\\
	2\pi x  \cos(\pi x) \sin(\pi y) \cos(2\pi t)
	\end{pmatrix} \quad \text{in $\Omega\times (0, T]$},
\end{split}
\end{align}
where $\Omega = (0,1)\times (0, 1)$. We assume that the medium characterized by \eqref{eq:ptv-b}-\eqref{eq:ptv-c}  is isotropic in the sense that the fourth-order elastic and viscoelastic tensors are defined by 
\begin{equation}\label{eq:hooke} 
\cC \btau = 2 \mu_{\cC}\boldsymbol{\zeta} + \lambda_{\cC} \tr(\boldsymbol{\zeta}) I 
\qquad \cD \boldsymbol{\zeta} = 2 \mu_{\cD} \btau +  \lambda_{\cD} \tr(\boldsymbol{\zeta}) I,
\end{equation}
in terms of Lamé coefficients $\mu_{\cD}>\mu_{\cC}>0$ and $\lambda_{\cD}>\lambda_{\cC}>0$. 

We compute the source terms $\bF$ and $g$ corresponding to the manufactured solution \eqref{exactSol} of \eqref{eq:poro_thermo_visco} with the material parameters given by \eqref{L1} or \eqref{L2}. We prescribe non-homogeneous boundary conditions \eqref{eq:BC} with $\Gamma^s_D = \partial \Omega$ and $\Gamma^f_N=\partial \Omega$.

In our first test, the parameters are chosen as 
\begin{gather}\label{L1}
	\begin{split}
		\rho = 1, \quad \mu_{\cC} = 10,\quad  \lambda_{\cC} = 30,\quad   \mu_{\cD} = 20,\quad   \lambda_{\cD} = 40, 
	\\
	\alpha = 1, \quad \beta = 1, \quad \omega=1,\quad \chi = 1, \quad s = 1.
	\end{split}
\end{gather}

We partition the time interval  $[0, T]$ into uniform subintervals of length $\Delta t$. We discretize in time using the Crank-Nicolson method. Owing to its second-order accuracy in time, we choose the time step such that $\Delta t \approx O(h^{(k+2)/2})$. This ensures that the temporal discretization error remains asymptotically smaller than the spatial error, thereby preserving the expected spatial convergence rates. To verify the accuracy of our method, we evaluate the following $L^2$-norm error measures at the final time $T$:  
\begin{align}\label{Errors1}
	\begin{split}
		\mathtt{e}^{L}_{hk}(\bsig,\psi) &:= \norm{(\bsig_E(T) - \bsig_{E,h}^L, \bsig_V(T) - \bsig_{E,h}^L, \psi(T) - \psi_h^L) }_{\cH_2}
 \\
 \mathtt{e}^L_{hk}(\bu, \bp)  &:= \norm{(\bu(T)  - \bu^L_h, \bp(T)  - \bp^L_h)}_{\cH_1}.
	\end{split}
\end{align}
 
In Figure~\ref{fig1}, we present the errors as functions of the mesh size $h$ for three different polynomial degrees $k$. The $L^2$-errors \eqref{Errors1} are displayed in log-log plots, with the expected rates of convergence represented by dashed lines. The results show that the errors $\mathtt{e}^{L}_{hk}(\bsig,\psi)$ and $\mathtt{e}^L_{hk}(\bu, \bp)$ achieve the optimal convergence rates of $O(h^{k+1})$ and $O(h^{k+2})$, respectively. 

\begin{figure}[!ht]
	\centering
	\includegraphics[width=\textwidth, height=0.2\textheight]{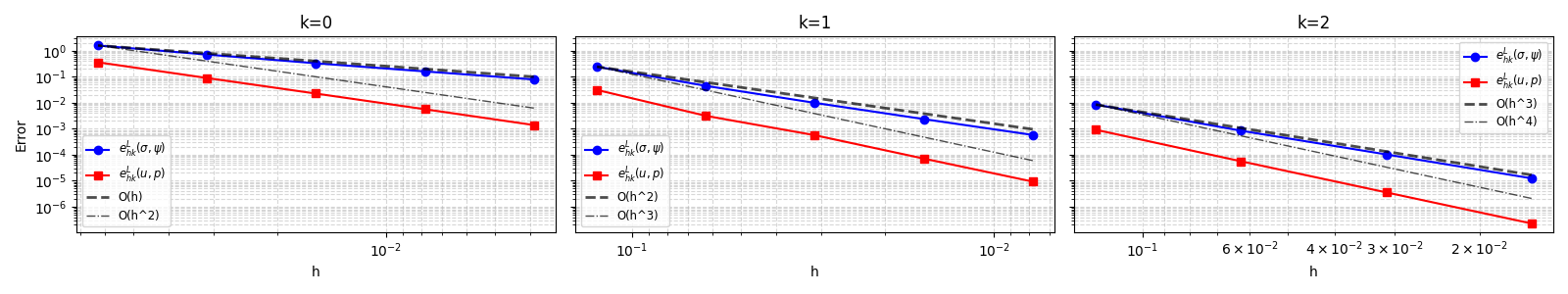}
	\caption{The errors \eqref{Errors1} are plotted against the mesh size h for various polynomial degrees k, using temporal over-refinements. Problem \eqref{eq:poro_thermo_visco} is defined with coefficients \eqref{L1} and the exact solution \eqref{exactSol}. }
	\label{fig1}
\end{figure}

To verify the accuracy and stability of the scheme for nearly incompressible poroelastic media with small storage coefficient $s$, we repeat the same experiment using the following set of material parameters:
\begin{gather}\label{L2}
	\begin{split}
		\rho = 1, \quad  \nu_{\cC} = 0.49,\quad   E_{\cC} = 100,\quad \nu_{\cD} = 0.4999,\quad   E_{\cD} = 1000, 
	\\
	\alpha = 1, \quad \beta = 1, \quad \omega=1,\quad \chi = 1, \quad s = 10^{-6}.
	\end{split}
\end{gather}
We recall that the Lamé coefficients are related to the Young's modulus $E$ and Poisson's ratio $\nu$ by the relations $\lambda_{\star} = \frac{E_{\star} \nu_{\star}}{(1+\nu_{\star})(1-2\nu_{\star})}$ and $\mu_{\star} = \frac{E_{\star}}{2(1+\nu_{\star})}$, where $\star \in \set{\cC, \cD}$.

The error decay for this case is shown in Figure~\ref{fig2}. These results demonstrate the robustness of the proposed HDG scheme in producing accurate approximations for the chosen extreme parameter values. The results are consistent with those obtained in Figure~\ref{fig1} for the coefficients in \eqref{L1}.

\begin{figure}[!ht]
	\centering
	\includegraphics[width=\textwidth, height=0.2\textheight]{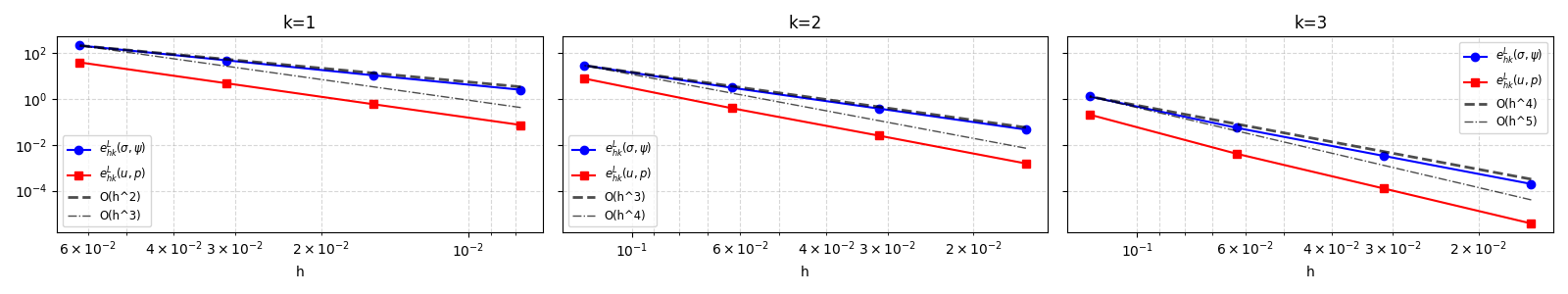}
	\caption{The errors \eqref{Errors1} are plotted against the mesh size h for various polynomial degrees k, using temporal over-refinements. Problem \eqref{eq:poro_thermo_visco} is defined with coefficients \eqref{L2} and the exact solution \eqref{exactSol}.}
	\label{fig2}
\end{figure}

Having established the scheme's behavior with respect to the mesh size $h$, we now turn to testing its performance with respect to the parameters $k$ and $\Delta t$.

We fix the space mesh size $h = 1/4$ and the time step $\Delta t = 10^{-6}$ and let the polynomial degree $k$ vary from 1 to 7. In Figure~\ref{T3} we present the errors $\mathtt{e}^{L}_{hk}(\bsig,p)$ and $\mathtt{e}^L_{hk}(\bu, \bp)$ at  $t = 0.3$  plotted against the polynomial degree $k$ on a semi-logarithmic scale. This example uses the same manufactured solution derived from \eqref{exactSol} with material coefficients given in \eqref{L1}. As expected, exponential convergence is observed.

In Figure~\ref{T4}, we show the convergence results obtained by fixing the spatial mesh size at $h=1/16$ and the polynomial degree at $k=3$, while varying the time step $\Delta t$ used to uniformly subdivide the time interval $[0,T]$ with $T=0.5$. The expected convergence rate of $O(\Delta t^2)$ is achieved as the time step is refined.

\begin{table}[!ht]
	\begin{minipage}{0.47\linewidth}
		\centering
  \includegraphics[width=0.9\textwidth]{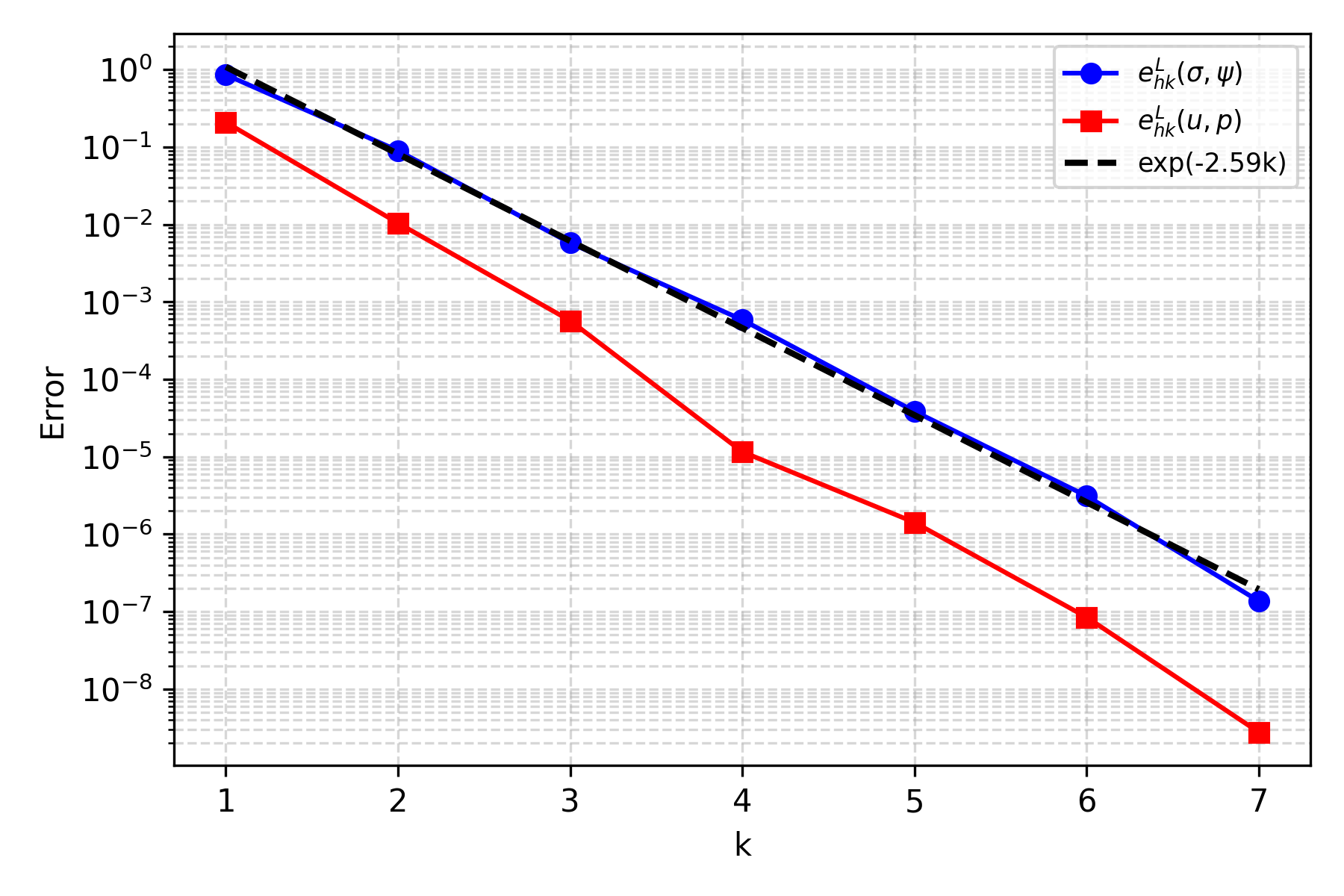}
		\captionof{figure}{Computed errors versus the polynomial degree $k$ with $h=1/4$ and $\Delta t = 10^{-6}$. The errors are measured at $t=0.3$, by employing the coefficients \eqref{L1}.}
		\label{T3}
	  \end{minipage}
	\hfill
	\begin{minipage}{0.47\linewidth}
	  \centering
\includegraphics[width=0.9\textwidth]{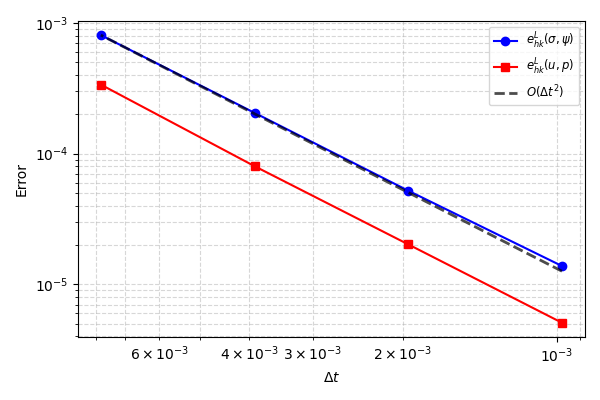}
	  \captionof{figure}{Computed errors for a sequence of uniform refinements in time with $h=1/16$ and $k=3$. The errors are measured at $t=0.5$, with the coefficients \eqref{L1}. }
	  \label{T4}
	\end{minipage}
  \end{table} 

  \subsection{Wave propagation in a thermoelastic medium}

  \begin{figure}[!ht]
	\begin{center}
	\includegraphics[scale=0.3]{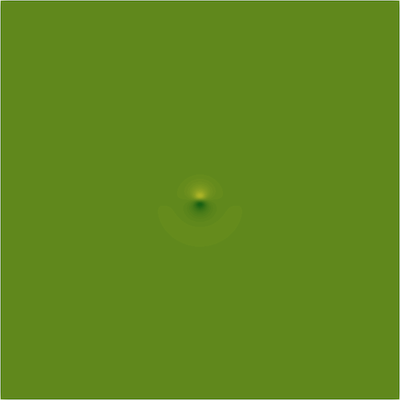}
	\includegraphics[scale=0.3]{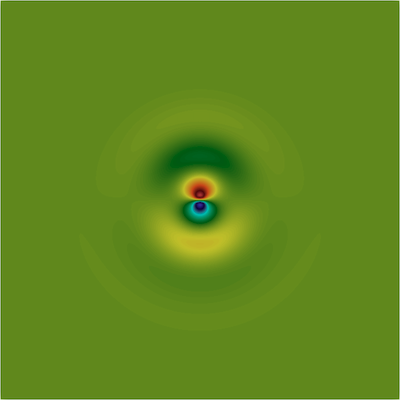}
	\includegraphics[scale=0.3]{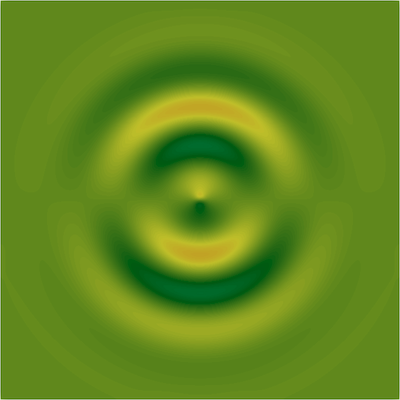}
	\\
	\includegraphics[width=0.5\textwidth]{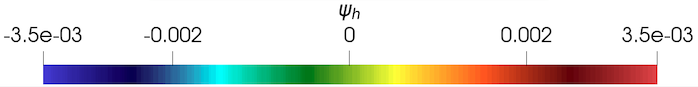}
	\\
	\includegraphics[scale=0.3]{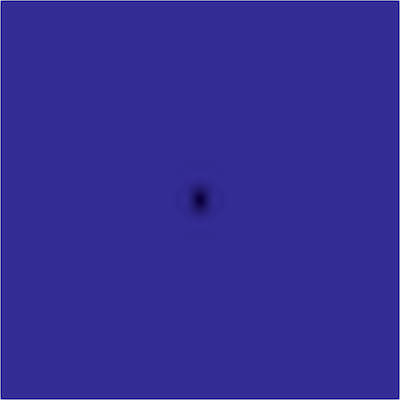}
	\includegraphics[scale=0.3]{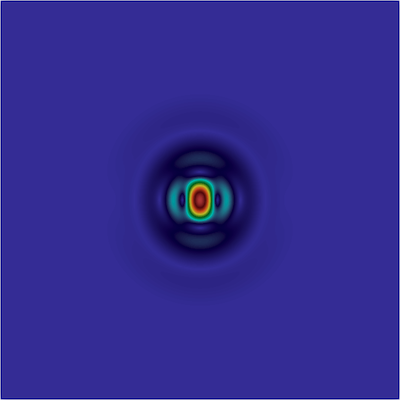}
	\includegraphics[scale=0.3]{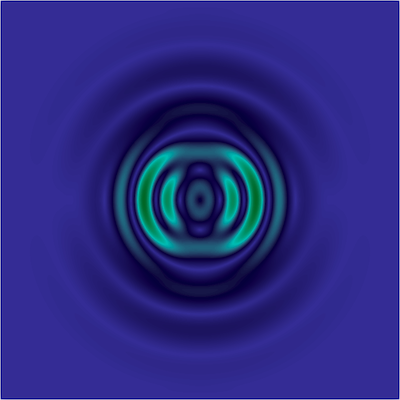}
	\\
	\includegraphics[width=0.5\textwidth]{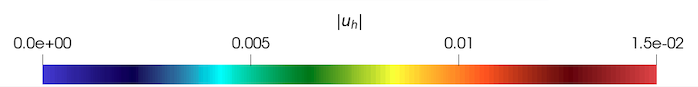}
	\end{center}
	\vspace{-4mm}
	\caption{
	Snapshots of the temperature $\psi$ (top row) and the module of the solid velocity $\bu$ (bottom row) at times 0.1\,\unit{s}, 0.3\,\unit{s}, and 0.5\,\unit{s} (left to right panels). Problem \eqref{eq:poro_thermo_visco} is solved using the source terms \eqref{sourceTE}, with vanishing initial and Dirichlet boundary conditions, and parameter set \eqref{L3}, with $h=50$, $k=5$, and $\Delta t = 10^{-5}$.}
	\label{fig:thermoElast}
	\end{figure}

In this test, we reproduce the numerical simulation from \cite{bonetti2024} of thermal-wave propagation in a homogeneous, isotropic thermoelastic medium occupying the square domain $\Omega = (0, 2310)\times (0, 2310)\, \unit{m^2}$. A vertically oriented force source is imposed at the center of the domain:
\begin{equation}\label{sourceTE}
	\bF = \exp\Bigl(-\tfrac{9\|\mathbf{r}\|^2}{2h^2}\Bigr)\;S(t)\;\mathbf{e}_y
\end{equation}
where $\boldsymbol{r} := (x- 1155, y - 1155)^\texttt{t}$ measures the distance from $(1155, 1155)$, and  
\begin{equation}\label{sourceTime}
	S(t)\;=\;A_0\cos\bigl(2\pi f_0 (t-t_0)\bigr)\,\exp\bigl(-2f_0^2\,(t-t_0)^2\bigr)
\end{equation}
models a compressive seismic pulse centered at \(t_0=0.3\)\,\unit{s}, with amplitude $A_0 = 10^4$ and peak-frequency $f_0 = 5$ \unit{Hz}. The physical parameters are set to 
\begin{gather}\label{L3}
	  \begin{split}
		\rho = 2650\, \unit{kg\per m^3 }, \quad   \mu_{\cC} = 6\cdot 10^9\, \unit{Pa},\quad  \lambda_{\cC} = 4 \cdot 10^9\, \unit{Pa}, 
	  \\
	  \chi = 1.49\cdot  10^{-8}/\theta \quad \text{and} \quad  \beta = 1/\theta,\quad \text{with} \quad \theta = 10.5\, \unit{m^2 Pa \per K^2 s },
	  \\
	  \alpha = 79200\, \unit{Pa\per K },\quad s = 117\, \unit{Pa\per K^2 }.
	  \end{split}
\end{gather}

In Figure~\ref{fig:thermoElast} we observe two coupled phenomena developing from the vertical source at the center of the domain.  In the top row (temperature $\psi$) a diffusive thermal front (the so-called $T$-wave) detaches radially from the source and propagates isotropically, with its amplitude decaying due to thermal conduction.  In the bottom row (velocity magnitude $|\bu|$) an elastic wavefront initially appears as a single nearly circular ring. By $t=0.5\,$s this ring has broadened and split into two concentric fronts: the outer, faster front corresponds to the compressional ($P$-) wave, and the inner, slower one to the shear ($S$-) wave. The overall symmetry of the patterns and the gradual emergence of two elastic modes are in excellent agreement with the results reported in \cite{bonetti2024, carcione2019}.

\subsection{Wave propagation in a poro-viscoelastic medium}

\begin{figure}[!ht]
    \begin{center}
    \includegraphics[scale=0.3]{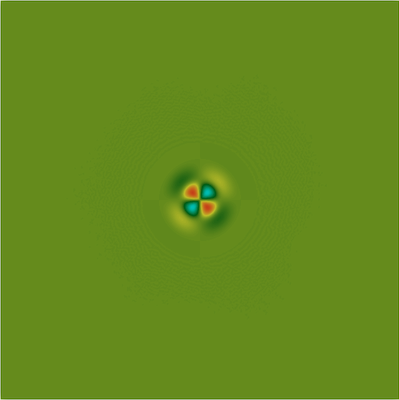}
    \includegraphics[scale=0.3]{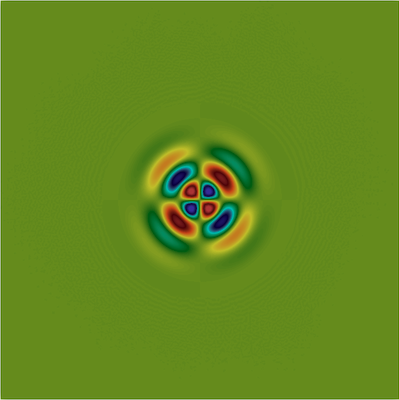}
    \includegraphics[scale=0.3]{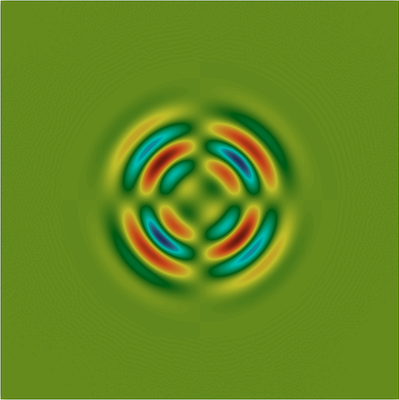}
    \\
    \includegraphics[scale=0.3]{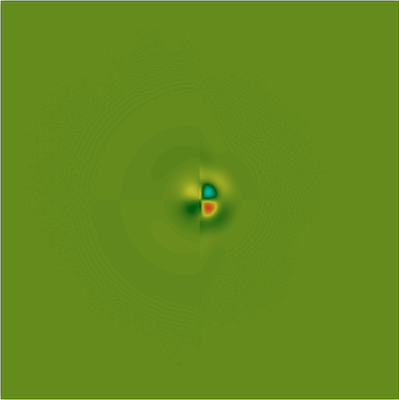}
    \includegraphics[scale=0.3]{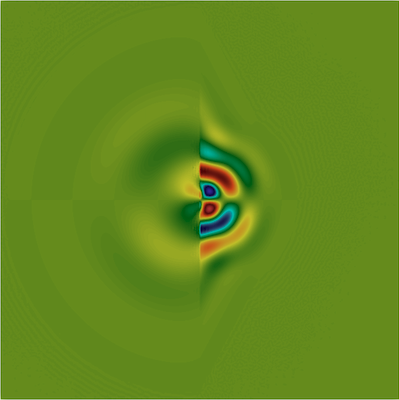}
    \includegraphics[scale=0.3]{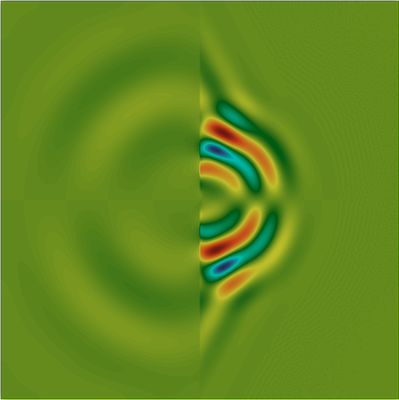}
    \\
    \includegraphics[width=0.5\textwidth]{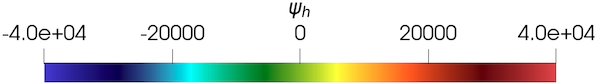}
    \end{center}
    \vspace{-4mm}
    \caption{
    Snapshots of the pressure field $\psi$ at $t = 0.2\,\unit{s}$, $0.4\,\unit{s}$, and $0.6\,\unit{s}$ (left to right). Problem \eqref{eq:poro_thermo_visco} is solved using the source term \eqref{Fshear}, homogeneous initial and Dirichlet boundary conditions, and parameter set \eqref{L4}, with mesh size $h=50$, polynomial degree $k=5$, and time step $\Delta t = 10^{-5}\,\unit{s}$. Top row: purely poroelastic medium ($\omega = 0$ throughout $\Omega$); bottom row: heterogeneous medium with poro-viscoelastic properties ($\omega = 0.9$) in the left half of $\Omega$ and poroelastic properties in the right half.}
    \label{fig:poroElast1}
\end{figure}

To evaluate the efficacy of our approach in modeling composite materials with heterogeneous viscoelastic properties, we present an illustrative example inspired by \cite{meddahi2023hp}. This numerical experiment examines wave dynamics within a hybrid medium comprising both poro-viscoelastic and purely poroelastic regions.

We consider a wave propagation problem in a computational domain $\Omega = (-1155, 1155)\times (0, 2310)\, \unit{m^2}$ containing an explosive source positioned at $\boldsymbol{x}_s = (0,1155) \, \unit{m}$. The spatial source function is defined as:
\[
  \boldsymbol{b}(x,y) = \begin{cases}
        \big(1 - \frac{\norm{\boldsymbol{r}}^2}{4h^2}\big)\frac{\boldsymbol{r}}{\norm{\boldsymbol{r}}} & \text{if $\norm{\boldsymbol{r}} < 2h$}
        \\
        \boldsymbol{0} & \text{otherwise}
    \end{cases},
\]
where $\boldsymbol{r} = (x, y - 1155)^\texttt{t}$ represents the distance vector from the source position, and $h$ denotes the mesh size used for spatial discretization. The forcing term is formulated as: 
\begin{equation}\label{Fshear}
    \bF = \mathbf{M} \boldsymbol{b}(x,y) S(t),
\end{equation} 
where $S(t)$ is defined by \eqref{sourceTime} and $\mathbf{M}$ is a matrix with entries $M_{11} = M_{22} = 0$ and $M_{12} = M_{21} = \sfrac{1}{\sqrt{2}}$. This configuration generates a shear source centered at position $\boldsymbol{x}_s$.

The physical parameters employed in this simulation are as follows: 
\begin{gather}\label{L4}
    \begin{split}
        \mu_{\cC} = 10^9\,\unit{Pa},\quad \lambda_{\cC} = 4 \cdot 10^8\,\unit{Pa},\quad \mu_{\cD} = 4\cdot 10^9\,\unit{Pa},\quad \lambda_{\cD} = 7\cdot 10^9\,\unit{Pa}, 
    \\
    \rho_F = 1025\,\unit{kg/m^3},\quad \rho_S = 2650\,\unit{kg/m^3},\quad \text{and} \quad \phi = 0.1,
    \\
    \rho = \phi \rho_F + (1 - \phi) \rho_S \quad \text{and} \quad \chi = \rho_F/\phi,
    \\
    \beta = \nu/\kappa,\quad \text{with} \quad \kappa = 10^{-14}\,\unit{m^2},\quad \text{and} \quad\nu = 0.001\,\unit{Pa\cdot s},
    \\
    \alpha = 1\quad \text{and} \quad s = 10^{-9}\,\unit{Pa^{-1}}.
    \end{split}
\end{gather}

\begin{figure}[!ht]
    \begin{center}
        \includegraphics[scale=0.3]{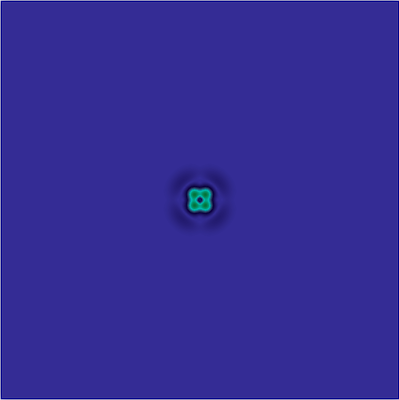}
        \includegraphics[scale=0.3]{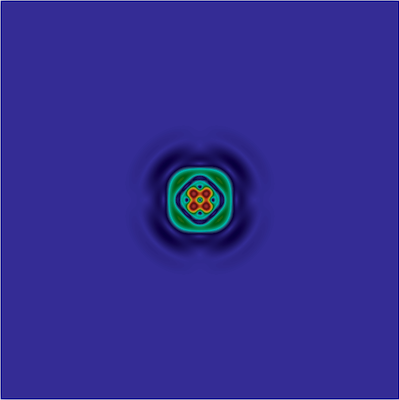}
        \includegraphics[scale=0.3]{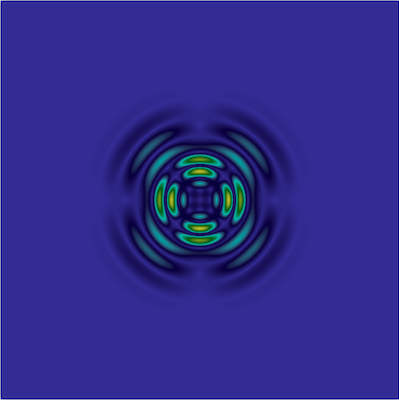}
        \\
        \includegraphics[scale=0.3]{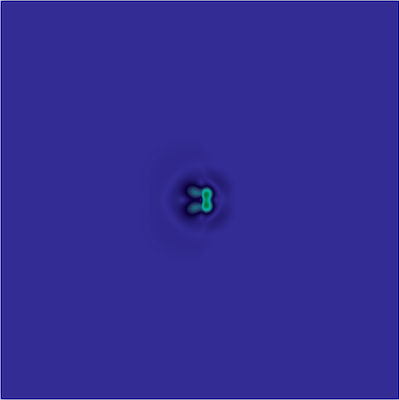}
        \includegraphics[scale=0.3]{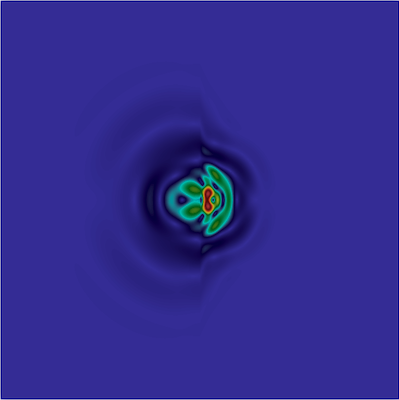}
        \includegraphics[scale=0.3]{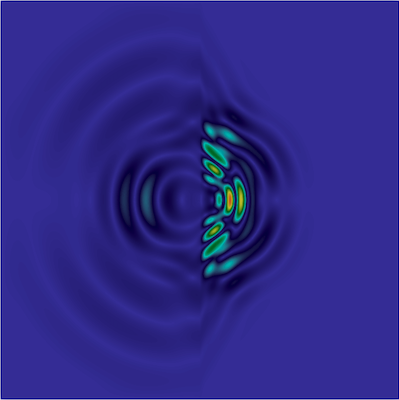}
        \\
    \includegraphics[width=0.5\textwidth]{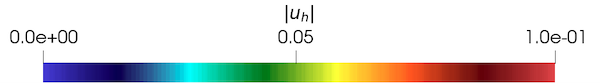}
    \end{center}
    \vspace{-4mm}
    \caption{
        Snapshots of the solid velocity magnitude $\|\bu\|$ at $t = 0.2\,\unit{s}$, $0.4\,\unit{s}$, and $0.6\,\unit{s}$ (left to right). Problem \eqref{eq:poro_thermo_visco} is solved using the source term \eqref{Fshear}, homogeneous initial and Dirichlet boundary conditions, and parameter set \eqref{L4}, with mesh size $h=50$, polynomial degree $k=5$, and time step $\Delta t = 10^{-5}\,\unit{s}$. Top row: purely poroelastic medium ($\omega = 0$ throughout $\Omega$); bottom row: heterogeneous medium with poro-viscoelastic properties ($\omega = 0.9$) in the left half of $\Omega$ and poroelastic properties in the right half.}
    \label{fig:poroElast2}
\end{figure}

Figures \ref{fig:poroElast1} and \ref{fig:poroElast2} present comparative visualizations of pressure and velocity fields in two distinct scenarios: a purely poroelastic medium (top row of each figure) and a heterogeneous medium comprising a poro-viscoelastic region in the left half and a purely poroelastic region in the right half (bottom row). These results effectively demonstrate how the viscoelastic dashpots modify wave propagation characteristics, including wave speeds, front diffusion, amplitude attenuation, and interface phenomena.

In the purely poroelastic case ($\omega=0$ throughout $\Omega$), the punctual shear force generates a characteristic quadrilateral pattern with four distinct lobes of motion in opposing directions (two compressive and two tensile lobes) in both velocity (Figure \ref{fig:poroElast2}) and pressure (Figure \ref{fig:poroElast1}) fields. This cloverleaf-like pattern surrounding the source point is characteristic of shear sources in isotropic media. In this scenario, classical P- and S-waves propagate with attenuation solely due to geometric spreading. Notably, Biot's slow wave is not visible in these simulations due to its significant attenuation by the fluid viscosity $\nu$, rendering it too weak to be observed at this scale, see \cite{meddahi2025}.

In the composite material configuration, with poro-viscoelastic properties ($\omega=0.9$) in the left half and purely poroelastic properties ($\omega=0$) in the right half, we observe the same fundamental wave types but with significantly modified characteristics. The poro-viscoelastic region exhibits waves with reduced amplitude and increased diffusion due to energy dissipation through the Zener dashpot mechanism. Notably, the poro-viscoelastic medium demonstrates higher initial stiffness, resulting in faster wave propagation during early time steps ($t \lesssim \omega$).

\section{Conclusions}
This study has developed a single first-order evolution formulation that captures both Zener-type poro-viscoelasticity and thermo-viscoelasticity, proved its well-posedness with semigroup theory, and designed a hybridizable discontinuous Galerkin discretization that remains energy-stable and hp-optimal on general simplicial meshes. Numerical experiments—from manufactured-solution tests to wave-propagation benchmarks in heterogeneous media—match the predicted convergence rates and reproduce the expected $P$-, $S$-, and $T$-waves, confirming both the analysis and the practical efficiency of the scheme. The framework therefore offers a high-order, robust and computationally economical tool for multiphysics simulations in geomechanics, biomechanics and thermoelastic materials, and it provides a foundation for forthcoming work on adaptive hp-refinement and nonlinear rheologies.

\appendix

\section{Finite element approximation properties}\label{appendix}

This appendix revisits well-established  $hp$--approximation properties, adapting them to match our specific notations and requirements.

For any integer $m\geq 0$ and $K\in \cT_h$, we denote by $\Pi_K^m$ the $L^2(K)$-orthogonal projection onto $\cP_{m}(K)$. This local projection extends naturally to a global projection $\Pi^m_\cT:\, L^2(\cT_h)\to \cP_m(\cT_h)$ by setting $(\Pi^m_\cT v)|_K = \Pi_K^m(v|_K)$ for all $K\in \cT_h$. Similarly, the global projection $\Pi_\cF^m:\, L^2(\cF_h)\to  \cP_{m}(\cF_h)$ is given by $(\Pi^m_\cF \hat v)|_{F} = \Pi_F^m(\hat v|_F)$ for all $F\in \cF_h$, where $\Pi^m_F$ is the $L^2(F)$ orthogonal projection onto $\cP_m(F)$. In the following, we maintain the notation $\Pi_\cT^m$ to refer to the $L^2$-orthogonal projection onto $\cP_m(\cT_h,E)$, for $E\in \{\bbR^d, \bbS\}$. When applied to tensor-valued functions, this projection preserves matrix symmetry as it acts component-wise. Similarly, $\Pi_\cF^m$ also denotes the $L^2$ orthogonal projection onto $\mathcal{P}_m(\mathcal{F}_h, \mathbb{R}^{m\times n})$. For brevity, we write: 
\[
   \Pi_\cT^m (\btau_E, \btau_V, \varphi) = (\Pi_\cT^m \btau_E, \Pi_\cT^m \btau_V, \Pi_\cT^m \varphi) \quad \text{and} \quad \Pi_\cF^m (\bv, \bq) = (\Pi_\cF^m\bv, \Pi_\cF^m\bq).
\]

We start with a fundamental discrete trace inequality.
\begin{lemma}\label{TraceDG}
	There exists a constant $C>0$ independent of $h$ and $k$ such that
\begin{equation}\label{discTrace}
		 \norm{\tfrac{h^{\sfrac{1}{2}}_{\cF}}{k+1} \xi}_{0,\partial \cT_h} \leq C \norm*{ \xi}_{0, \cT_h}\quad \forall  \xi \in \cP_{k}(\cT_h).
\end{equation}
\end{lemma}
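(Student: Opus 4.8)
The plan is to reduce \eqref{discTrace} to a purely local, element-wise polynomial trace estimate and then reassemble. First I would expand the skeleton norm as a sum over elements and their faces,
\[
\norm{\tfrac{h_\cF^{\sfrac{1}{2}}}{k+1}\xi}_{0,\partial\cT_h}^2 = \sum_{K\in\cT_h}\sum_{F\in\cF(K)} \frac{h_F}{(k+1)^2}\,\norm{\xi}_{0,F}^2,
\]
and invoke the local quasi-uniformity \eqref{reguT}, which gives $h_F\le h_K$ for every $F\in\cF(K)$ (sub-faces produced by hanging nodes are only smaller, so the bound persists). This majorizes the right-hand side by $\sum_{K\in\cT_h}\frac{h_K}{(k+1)^2}\norm{\xi}_{0,\partial K}^2$, so the whole statement follows once we establish the single-element inequality
\[
\norm{\xi}_{0,\partial K}^2 \le C\,\frac{(k+1)^2}{h_K}\,\norm{\xi}_{0,K}^2 \qquad \forall\, \xi\in\cP_k(K),
\]
with $C$ independent of $K$, $h_K$, and $k$.

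To prove this local bound I would pass to a fixed reference element $\hat K$ via the affine map $F_K:\hat K\to K$ (affine for both simplices and parallelepipeds). Shape regularity of $\cT_h$ ensures $\norm{\xi}_{0,K}^2\simeq h_K^{d}\,\norm{\hat\xi}_{0,\hat K}^2$ and $\norm{\xi}_{0,\partial K}^2\simeq h_K^{d-1}\,\norm{\hat\xi}_{0,\partial\hat K}^2$, with constants depending only on the shape-regularity parameter. The surviving power $h_K^{-1}$ is then accounted for exactly, and matters reduce to the reference inequality $\norm{\hat\xi}_{0,\partial\hat K}^2\le C\,(k+1)^2\,\norm{\hat\xi}_{0,\hat K}^2$ for all $\hat\xi\in\cP_k(\hat K)$.

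The crux — and the step I expect to be the main obstacle — is precisely this sharp quadratic $k$-dependence on the reference element. A naive inverse/trace estimate would produce a larger power of $(k+1)$; obtaining the optimal factor $(k+1)^2$ requires the explicit polynomial trace constant of Warburton and Hesthaven, which on a $d$-simplex yields $\norm{\hat\xi}_{0,\partial\hat K}^2\le \frac{(k+1)(k+d)}{d}\,\frac{|\partial\hat K|}{|\hat K|}\,\norm{\hat\xi}_{0,\hat K}^2$, together with an analogous tensor-product estimate on the reference cube for the quadrilateral/parallelepiped case. Collecting these and verifying that all hidden constants remain uniform across element type (simplicial versus tensor-product) completes the argument.
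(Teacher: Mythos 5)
Your argument is correct and is essentially the same route the paper takes: the paper does not reprove the lemma but defers to \cite[Lemma~3.2]{meddahi2023hp}, whose proof rests on exactly the localization-plus-scaling reduction and the sharp Warburton--Hesthaven polynomial trace constant (with its tensor-product analogue on the reference cube) that you invoke, and you correctly identify that a naive trace-plus-inverse estimate would lose the optimal $(k+1)^2$ factor. The only minor point to tidy up is that for general quadrilaterals the map $F_K$ is bilinear rather than affine, though under the stated shape-regularity the norm equivalences you use still hold with uniform constants.
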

\begin{proof} See \cite[Lemma 3.2]{meddahi2023hp}.
\end{proof}

We employ the following two approximation error estimates as key ingredients in deriving our asymptotic $hp$ error bounds.
\begin{lemma}\label{lem:maintool}
	There exists a constant $C>0$ independent of $h$ and $k$  such that
	\begin{equation}\label{tool1}
		\norm{\xi  - \Pi^k_\cT \xi}_{0,\cT_h} + \norm{ \tfrac{h^{\sfrac{1}{2}}_{\cF}}{k+1}(\xi - \Pi_\cT^k \xi)}_{0,\partial \cT_h}
		\leq
		C \tfrac{h_K^{\min\{ r, k \}+1}}{(k+1)^{r+1}}  \norm{\xi}_{1+r,\Omega},
	\end{equation}
	for all $\xi \in  H^{1+r}(\Omega)$, with $r\geq 0$.
\end{lemma}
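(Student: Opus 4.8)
The plan is to reduce the global estimate to a local one on each element, since $\Pi^k_\cT$ acts as the local $L^2(K)$-orthogonal projection $\Pi^k_K$ on every $K \in \cT_h$ and both norms in \eqref{tool1} split as sums over the elements. Thus it suffices to establish, for each $K\in\cT_h$,
\begin{equation*}
\norm{\xi - \Pi^k_K\xi}_{0,K} + \tfrac{h_F^{\sfrac12}}{k+1}\norm{\xi - \Pi^k_K\xi}_{0,\partial K} \leq C\,\tfrac{h_K^{\min\{r,k\}+1}}{(k+1)^{r+1}}\norm{\xi}_{1+r,K},
\end{equation*}
and then sum over $K$, using $\sum_{K}\norm{\xi}_{1+r,K}^2 = \norm{\xi}_{1+r,\Omega}^2$ together with the quasi-uniformity \eqref{reguT} to bound each $h_F$ by $h_K$ up to the constant $\gamma$.

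For the volume term I would invoke the classical $hp$-approximation estimates for the $L^2$-orthogonal projection onto $\cP_k$ (of Babuška--Suri type). Passing to the reference element by the affine map $F_K$ --- under which $\Pi^k_K$ and the reference projection $\Pi^k_{\hat K}$ commute, because $\det DF_K$ is constant and $\cP_k$ is affine-invariant --- and tracking the powers $\norm{DF_K}\simeq h_K$, $\norm{DF_K^{-1}}\simeq h_K^{-1}$, $\abs{\det DF_K}\simeq h_K^{d}$, yields both
\begin{equation*}
\norm{\xi - \Pi^k_K\xi}_{0,K} \leq C\,\tfrac{h_K^{\min\{r,k\}+1}}{(k+1)^{r+1}}\norm{\xi}_{1+r,K}
\quad\text{and}\quad
\abs{\xi - \Pi^k_K\xi}_{1,K} \leq C\,\tfrac{h_K^{\min\{r,k\}}}{(k+1)^{r}}\norm{\xi}_{1+r,K},
\end{equation*}
where $\min\{r,k\}+1 = \min\{k+1,1+r\}$ and the $p$-powers $(k+1)^{-(1+r)}$ and $(k+1)^{-r}$ come directly from the dimensionless reference-element bounds $\norm{u-\Pi^k u}_{0} \lesssim (k+1)^{-s}\norm{u}_{s}$ and $\abs{u-\Pi^k u}_{1} \lesssim (k+1)^{1-s}\norm{u}_{s}$ with $s = 1+r$. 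The first of these displays is already the volume part of the target.

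For the boundary term I would apply the scaled continuous trace inequality $\norm{v}_{0,\partial K} \leq C\bigl(h_K^{-\sfrac12}\norm{v}_{0,K} + h_K^{\sfrac12}\abs{v}_{1,K}\bigr)$ to $v = \xi - \Pi^k_K\xi$. After replacing $h_F$ by $h_K$ via \eqref{reguT}, this gives $\tfrac{h_F^{\sfrac12}}{k+1}\norm{\xi-\Pi^k_K\xi}_{0,\partial K} \lesssim \tfrac{1}{k+1}\norm{\xi-\Pi^k_K\xi}_{0,K} + \tfrac{h_K}{k+1}\abs{\xi-\Pi^k_K\xi}_{1,K}$. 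Inserting the two volume estimates above, the first summand is of order $h_K^{\min\{r,k\}+1}(k+1)^{-(r+2)}$, a strictly higher $p$-order than required, while the second equals $\tfrac{h_K^{\min\{r,k\}+1}}{(k+1)^{r+1}}\norm{\xi}_{1+r,K}$, matching the claimed rate exactly; summing over $K$ then closes the argument.

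The delicate point one must get exactly right is the $p$-dependence: both the $L^2$ and the $H^1$-seminorm projection bounds must carry the sharp powers of $(k+1)$, since it is precisely the cancellation of the $(k+1)^{-r}$ from the $H^1$-seminorm estimate against the $h_K/(k+1)$ factor of the scaled trace inequality that produces the target power $(k+1)^{-(r+1)}$. A looser, $h$-only trace or approximation argument would lose a power of $k$ and spoil the $hp$-sharpness, and the local quasi-uniformity \eqref{reguT} is exactly what permits freely exchanging $h_F$ and $h_K$ throughout.
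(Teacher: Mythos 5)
Your reduction to a single element, the volume estimate (where $\Pi^k_K$ is the $L^2(K)$-best approximation, so the Babu\v{s}ka--Suri bound applies directly), and the use of \eqref{reguT} to trade $h_F$ for $h_K$ are all sound; note the paper itself gives no argument here but defers to \cite[Lemma 3.3]{meddahi2023hp}. The genuine gap is the $H^1$-seminorm estimate $\abs{\xi-\Pi^k_K\xi}_{1,K}\lesssim h_K^{\min\{r,k\}}(k+1)^{-r}\norm{\xi}_{1+r,K}$ that your boundary argument hinges on. The reference-element bound $\abs{u-\Pi^k u}_{1}\lesssim (k+1)^{1-s}\norm{u}_{s}$ you invoke is a best-approximation (or specially constructed quasi-interpolant) statement; the $L^2$-\emph{orthogonal} projection is not the $H^1$-best approximation, and it is known (Canuto--Quarteroni) that already in one dimension its $H^1$-error only decays like $(k+1)^{3/2-s}\norm{u}_{s}$, i.e.\ it loses a factor $(k+1)^{1/2}$. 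Since, as you yourself stress, your additive trace argument relies on the exact cancellation of $(k+1)^{-r}$ against the weight $h_K/(k+1)$, substituting the correct rate leaves the boundary term at $h_K^{\min\{r,k\}+1}(k+1)^{-(r+1/2)}\norm{\xi}_{1+r,K}$ --- half a power of $k+1$ short of \eqref{tool1}.

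Two standard repairs are available. First, replace the additive trace inequality by the multiplicative one, $\norm{v}^2_{0,\partial K}\lesssim \norm{v}_{0,K}\abs{v}_{1,K}+h_K^{-1}\norm{v}^2_{0,K}$: its geometric-mean structure combines the optimal $L^2$ rate $(k+1)^{-(r+1)}$ with the degraded $H^1$ rate $(k+1)^{-(r-1/2)}$ to give $\norm{v}_{0,\partial K}\lesssim h_K^{\min\{r,k\}+1/2}(k+1)^{-(r+1/4)}\norm{\xi}_{1+r,K}$, which after multiplication by $h_\cF^{1/2}/(k+1)$ is even better than required. Second, split $\xi-\Pi^k_K\xi=(\xi-\pi_k\xi)+(\pi_k\xi-\Pi^k_K\xi)$ with $\pi_k$ an $hp$-optimal simultaneous approximant: the first piece is handled by the continuous trace inequality with the optimal $L^2$ and $H^1$ rates of $\pi_k$, while the second piece is a polynomial, so the discrete trace inequality \eqref{discTrace} together with $\norm{\pi_k\xi-\Pi^k_K\xi}_{0,K}\le\norm{\xi-\pi_k\xi}_{0,K}+\norm{\xi-\Pi^k_K\xi}_{0,K}$ (both optimal in $L^2$) yields the claimed bound. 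Either route closes the argument; as written, yours does not.
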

\begin{proof}
	See \cite[Lemma 3.3]{meddahi2023hp}.
\end{proof}

\begin{lemma}[Projection Error in Product Spaces]\label{maintool2}
There exists a mesh- and polynomial degree-independent constant $C>0$ such that for all $\bw \in H^{2+r}(\Omega,\bbR^{d})$ with $r \geq 0$:
\begin{equation}\label{tool2}
    \abs*{\Big( \bw - \Pi^{k+1}_{\cT} \bw, \bw|_{\partial \cF_h} - \Pi^{k+1}_{\cF} (\bw|_{\partial \cF_h}) \Big) }_{\bigstar}
    \leq
    C \tfrac{h^{\min\{ r, k \}+1}}{(k+1)^{r+\sfrac{1}{2}}}  \norm{\bw}_{2+r,\Omega},
\end{equation}
where $\bigstar$ can be either $\mathcal{Q}$ or $\mathcal{U}$.
\end{lemma}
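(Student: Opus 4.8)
The plan is to work directly with $\overrightarrow{\bchi} := (\bchi, \hat\bchi)$, where $\bchi := \bw - \Pi^{k+1}_\cT\bw$ and $\hat\bchi := \bw|_{\cF_h} - \Pi^{k+1}_\cF(\bw|_{\cF_h})$, and to estimate separately the two contributions appearing in the semi-norms \eqref{norm_spaces}. Expanding, for $\bigstar = \mathcal{U}$ one has $\abs{\overrightarrow{\bchi}}^2_{\mathcal{U}} = \norm{\beps(\bchi)}^2_{0,\cT_h} + \norm{\tfrac{k+1}{h_\cF^{\sfrac{1}{2}}}\jump{\overrightarrow{\bchi}}}^2_{0,\partial\cT_h}$, while for $\bigstar = \mathcal{Q}$ the first term is replaced by $\norm{\div\bchi}^2_{0,\cT_h}$. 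Since both $\norm{\beps(\bchi)}_{0,\cT_h}$ and $\norm{\div\bchi}_{0,\cT_h}$ are controlled pointwise by $\norm{\nabla\bchi}_{0,\cT_h}$, the two cases can be handled simultaneously, and it suffices to bound the volumetric gradient term and the weighted jump term.

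First I would dispatch the volumetric term using the elementwise $hp$-approximation bound for the $L^2$-projection in the $H^1$-seminorm (the instance with regularity $s=2+r$ and degree $k+1$ of the standard estimates underlying Lemma~\ref{lem:maintool}), which yields $\norm{\nabla\bchi}_{0,\cT_h} \lesssim \tfrac{h^{\min\{r,k\}+1}}{(k+1)^{1+r}}\norm{\bw}_{2+r,\Omega}$. This rate is already sharper by a factor $(k+1)^{\sfrac{1}{2}}$ than the one claimed, so this term is harmless and the weighted jump term is what determines the overall rate.

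For the jump term, since $\bw$ is single-valued on the skeleton I would write $\jump{\overrightarrow{\bchi}} = (\bw - \Pi^{k+1}_\cT\bw) - (\bw - \Pi^{k+1}_\cF\bw)$ and split it by the triangle inequality into an element-trace part and a facet-projection part. For the element-trace part I would invoke the sharp $hp$ trace-approximation estimate $\norm{\bw - \Pi^{k+1}_\cT\bw}_{0,\partial K} \lesssim \tfrac{h_K^{\min\{r,k\}+\sfrac{3}{2}}}{(k+1)^{r+\sfrac{3}{2}}}\norm{\bw}_{2+r,K}$; multiplying by the weight $(k+1)/h_F^{\sfrac{1}{2}}$ and using the local quasi-uniformity \eqref{reguT} to replace $h_F$ by $h_K$ produces exactly $\tfrac{h^{\min\{r,k\}+1}}{(k+1)^{r+\sfrac{1}{2}}}\norm{\bw}_{2+r,\Omega}$. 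For the facet-projection part I would exploit that $\Pi^{k+1}_\cF(\bw|_F)$ is the $L^2(F)$-best approximation in $\cP_{k+1}(F)$ whereas $(\Pi^{k+1}_\cT\bw)|_F \in \cP_{k+1}(F)$, so that $\norm{\bw - \Pi^{k+1}_\cF\bw}_{0,F} \leq \norm{(\bw - \Pi^{k+1}_\cT\bw)|_F}_{0,F}$; summing over the faces of each element then bounds this contribution by the element-trace part already estimated. Collecting the two terms and retaining the larger of the two exponents of $(k+1)$ gives the asserted bound.

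The main obstacle is securing the correct power of $(k+1)$ in the jump term. Resorting to the continuous (element) trace inequality $\norm{\eta}^2_{0,\partial K} \lesssim h_K^{-1}\norm{\eta}^2_{0,K} + h_K\norm{\nabla\eta}^2_{0,K}$ applied to $\eta = \bchi$ would cost an extra factor $(k+1)^{\sfrac{1}{2}}$ and only deliver $(k+1)^{-r}$; obtaining the stated $(k+1)^{-(r+\sfrac{1}{2})}$ therefore hinges on using the genuine $hp$ trace-approximation result, with its $(k+1)^{-(s-\sfrac{1}{2})}$ scaling, rather than a trace inequality. This half power is precisely the source of the sub-optimality flagged in Remark~\ref{R1}.
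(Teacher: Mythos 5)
Your argument is correct and follows the same route the paper delegates to \cite[Lemma 3.4]{meddahi2023hp}: split the semi-norm \eqref{norm_spaces} into the volumetric part (controlled by $\norm{\nabla(\bw-\Pi^{k+1}_\cT\bw)}_{0,\cT_h}$, which is indeed $(k+1)^{\sfrac12}$ better than needed) and the weighted jump, reduce the facet-projection contribution to the element-trace one via the $L^2(F)$-best-approximation property of $\Pi^{k+1}_\cF$, and invoke \eqref{reguT} exactly where the paper says it is needed. You also correctly isolate the one delicate point—the element-trace term must be estimated with the multiplicative-trace/sharp $hp$ trace-approximation bound scaling like $(k+1)^{-(r+\sfrac32)}$, not with the additive trace inequality or a rescaled version of \eqref{tool1}, which would lose the half power of $(k+1)$ and miss the stated rate.
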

\begin{proof}
The result follows the proof technique of \cite[Lemma 3.4]{meddahi2023hp}. 
\end{proof}

\bibliographystyle{plainnat}
\bibliography{cvBib.bib}

\end{document}